\theoremstyle{plain} 
\newtheorem{theorem}{Theorem}[section]
\newtheorem{proposition}[theorem]{Proposition}
\newtheorem{lemma}[theorem]{Lemma} 
\theoremstyle{definition} 
\newtheorem{definition}[theorem]{Definition}
\theoremstyle{remark} 
\newtheorem{remark}[theorem]{Remark}
\renewcommand{\P}{\mathrm{P}}
\newcommand{\la}{\langle \,}
\newcommand{\ra}{\,\rangle }
\newcommand{\R}{\mathbb{R}}
\newcommand{\C}{\mathcal{C}}
\renewcommand{\L}{\mathcal{L}}
\renewcommand{\H}{\mathcal{H}}
\newcommand{\D}{\mathcal{D}}
\newcommand{\Lip}{\mathrm{Lip}}
\newcommand{\E}{\mathcal{E}}
\newcommand{\tK}{\widetilde{K}}
\newcommand{\spt}{\mathrm{spt}\,}
\newcommand{\trho}{\widetilde{\rho}}
\newcommand{\tX}{\widetilde{X}}
\newcommand{\tY}{\widetilde{Y}}
\newcommand{\Uf}{U^{\text{free}}}
\newcommand{\uf}{u^{\text{free}}}
\newcommand{\Xf}{X^{\text{free}}}
\newcommand{\Ue}{U^{\text{ex}}}
\newcommand{\Xe}{X^{\text{ex}}}
\newcommand{\Ge}{\Gamma^{\text{ex}}}
\newcommand{\id}{\mathrm{id}}
\newcommand{\vep}{\varepsilon}
\newcommand{\weak}{\DOTSB\protect\relbar\protect\joinrel\rightharpoonup}
\newcommand{\IGNORE}[1]{}
\title{One-dimensional granular system with memory effects}
\author{C. Perrin\footnote{Aix Marseille Univ, CNRS, Centrale Marseille, I2M, Marseille, France; charlotte.perrin@univ-amu.fr}\,, M. Westdickenberg\footnote{Institut f\"ur Mathematik, RWTH Aachen University, Templergraben 55, 52062 Aachen, Germany; mwest@instmath.rwth-aachen.de}}
\begin{document}
	\maketitle

\begin{small}
\begin{center}
{\bf Abstract}
\end{center}
We consider a hybrid compressible/incompressible system with memory
effects, introduced recently by Lefebvre Lepot and Maury for the
description of one-dimensional granular flows. We prove a global
existence result for this system without assuming additional viscous
dissipation. Our approach extends the one by Cavalletti et al. for the
pressureless Euler system to the constrained granular case with memory
effects. We construct Lagrangian solutions based on an explicit
formula using the monotone rearrangement associated to the density. We
explain how the memory effects are linked to the external constraints
imposed on the flow. This result can also be extended to a
heterogeneous maximal density constraint depending on time and space.
	
	\vspace{1cm}
	\noindent{\bf Keywords:} Granular flows, pressureless gas dynamics.
	
	\medskip
	\noindent{\bf MSC:} 35Q35, 49J40, 76T25.
\end{small}

\section*{Introduction} In this paper, we consider a
one-dimensional model for immersed granular flows, introduced by
Lefebvre-Lepot and Maury in~\cite{lefebvre2011} and~\cite{maury2007}. The model consists of a system of
nonlinear partial differential equations describing the solid/liquid
mixture through the evolution of the density of solid particles $\rho$
and the Eulerian velocity field $u$. It takes the form
\begin{subnumcases}{\label{eq:granularsystem}}
\partial_t \rho + \partial_x(\rho u) = 0 \label{eq:granularsystem_mass}\\
\partial_t (\rho u) + \partial_x(\rho u^2) + \partial_x p = \rho f \label{eq:granularsystem_mom}\\
\partial_t \gamma + u\partial_x \gamma = -p \label{eq:granularsystem_pot}\\
0 \leq \rho \leq 1 \label{eq:granularsystem_max}\\
(1-\rho)\gamma = 0, \quad \gamma \leq 0, \label{eq:granularsystem_gamma}
\end{subnumcases}
where $p$ represents the pressure, and $f$ is an external force.
Equations~\eqref{eq:granularsystem_mass} and
\eqref{eq:granularsystem_mom} express the local conservation of mass
and momentum, respectively. The density is confined to values between
$0$ (vacuum) and $1$ (for simplicity of presentation), with $\rho=1$
representing the congested state. The pressure $p$ plays the role of
Lagrange multiplier for this pointwise constraint: through the
momentum equation \eqref{eq:granularsystem_mom}, it acts on the fluid
to ensure that condition \eqref{eq:granularsystem_max} remains
satisfied everywhere. The amount of compression that the fluid is
exposed to, but cannot accommodate because of
\eqref{eq:granularsystem_max}, is captured in the \emph{adhesion
potential} $\gamma$, which is linked to the pressure $p$ through
Equation~\eqref{eq:granularsystem_pot}. It expresses a memory effect,
keeping track of the history of the constraint satisfaction of the
system over the course of time.

The system of
Equations~\eqref{eq:granularsystem_mass}--\eqref{eq:granularsystem_gamma}
therefore model two very different regimes that occur in the flow: in
free zones, characterized by the condition $\rho<1$, we have a
pressureless dynamics of a compressible flow. In this regime, both $p$
and $\gamma$ vanish; see \eqref{eq:granularsystem_gamma} and
\eqref{eq:granularsystem_pot}. In the congested zones, characterized
by $\rho=1$, we have the dynamics of an incompressible flow. The
continuity equation \eqref{eq:granularsystem_mass} implies that in
congested zones the velocity must satisfy $\partial_x u = 0$ so that
the action of the external force $f$ must be balanced by the pressure
$p$, which in turn is recorded into the adhesion potential $\gamma$.
Note that in \cites{lefebvre2011, maury2007} instead of 
\eqref{eq:granularsystem_pot} the equation 
\begin{equation}\label{eq:transport-gamma}
\partial_t \gamma + \partial_x (\gamma u) = -p
\end{equation}
is considered, which is \emph{formally} equivalent to
\eqref{eq:granularsystem_pot} since $\partial_x u = 0$ if $\rho=1$. On
the other hand, if $\rho<1$, then $\gamma$ must vanish because of
\eqref{eq:granularsystem_gamma}. We would argue, however, that the
form \eqref{eq:granularsystem_pot} is more natural. In fact,
differentiating this equation with respect to $x$, we obtain
\begin{equation}
	\partial_t \big( \partial_x \gamma \big) + \partial_x\big( u \partial_x \gamma\big) 
		= -\partial_x p.
\label{eq:conser}
\end{equation}
Subtracting this equation from \eqref{eq:granularsystem_mom}, we
observe that the pressure term cancels, giving
\begin{equation}
	\partial_t ( \rho u-\partial_x\gamma) + \partial_x\big( (\rho u-\partial_x\gamma)u \big)
		= \rho f.
\label{E:NEWMOM}
\end{equation}
Hence $\partial_x \gamma$ plays the role of an additional momentum.
Because of the exclusion relation \eqref{eq:granularsystem_gamma}, the
adhesion potential $\gamma$ can only be different from zero where
$\rho=1$, thus $\partial_x \gamma$ is absolutely continuous with
respect to $\rho$. In principle, it is therefore possible to define a
velocity $v$ such that $\partial_x \gamma = \rho v$. We can then
rewrite \eqref{E:NEWMOM} with $w := u-v$ in the form
\begin{equation}
	\partial_t (\rho w) + \partial_x(\rho uw) = \rho f.
\label{E:NEWMOM2}
\end{equation}

In~\cite{maury2007}, the system~\eqref{eq:granularsystem} has been
supplemented with a collision law that prevents elastic shocks between
congested blocks. This condition can be expressed as
\begin{equation}
u(t^+) = \mathrm{P}_{\mathrm{Adm}_{\rho,\gamma}}\big(u(t^-)\big),
\end{equation}
where $u(t^\pm)$ denotes the one-sided limits of the velocity at time
$t$, and with $\mathrm{P}_{\mathrm{Adm}_{\rho,\gamma}}$ the $L^2$
projection onto the set of admissible velocities, defined as the $L^2$
closure of
\[ 
	\left\{ v \in H^1 \colon \begin{cases}
			\partial_x v \geq 0 ~\text{a.e. on}~\{\rho = 1, \gamma=0\}\\
     		\partial_x v = 0 ~\text{a.e. on}~\{\rho = 1, \gamma < 0\}	
		\end{cases} \right\} . 
\]

\smallskip

The memory effects exhibited by system \eqref{eq:granularsystem} have
been brought to light by Maury~\cite{maury2007} in the case of a
single solid particle. He examined a physical system formed by a
vertical wall and a spherical solid particle that is immersed in a
viscous liquid. The particle evolves along the horizontal axis and is
submitted to an external force and to the lubrication force exerted by
the liquid. The latter becomes predominant when the particle is
getting closer to the wall. At first order when the distance $q$
between the particle and the wall goes to $0$, it takes the form
$F_\text{lub} = - C \eta \,\frac{\dot{q}}{q}$, with $\eta$ the
viscosity of the liquid, $C>0$ a constant that depends on the diameter
of the particle. It prevents the contact in finite time of particle
and wall.

Considering the limit of vanishing liquid viscosity $\eta=\vep
\rightarrow 0$, Maury proved in~\cite{maury2007} the convergence
toward a hybrid system (see system~\eqref{eq:glueyparticle} below)
describing the two possible states of the system: \emph{free} when
$q>0$ and \emph{stuck} when $q=0$. In the limit, the system involves a
new variable $\gamma$, the adhesion potential, which is the residual
effect of the singular lubrication force $F^\vep_\text{lub}$ for
$\vep\rightarrow 0$. The potential describes the stickyness of the
particle: even in case of a pulling external force, it may take some
time before the particle takes off from the wall.
  
Lefebvre-Lepot and Maury have extended this idea to a one-dimensional
macroscopic system of aligned solid particles:
system~\eqref{eq:granularsystem} is obtained in~\cite{lefebvre2011} as
the formal limit of
\begin{subnumcases}{\label{eq:suspension}}
\partial_t \rho_\vep + \partial_x(\rho_\vep u_\vep) = 0 \\
\partial_t(\rho_\vep u_\vep) + \partial_x(\rho u_\vep^2) - \partial_x\left(\dfrac{\vep}{1-\rho_\vep}\partial_x u_\vep\right) = \rho_\vep f.
\end{subnumcases}
The lubrication force is represented at this macroscopic scale by the
singular viscous term $\partial_x(\frac{\vep}{1-\rho_\vep}\partial_x
u_\vep)$, which prevents, by analogy with the single particle case,
the formation of congested domains $\rho=1$ when $\vep >0$. The
rigorous proof, however, of the convergence of solutions
of~\eqref{eq:suspension} to solutions of~\eqref{eq:granularsystem}
remains an open problem. The mathematical difficulty of this singular
limit relies in the lack of compactness of the non-linear term
$\rho_\vep u_\vep^2$. This kind of singular limit has nevertheless
been proved in~\cite{perrin20161d} (see also~\cite{perrin2016AMRX} for
a result in dimension 2) on an augmented system where an additional
physical dissipation is taken into account.

\bigskip
%We intend in this paper to prove the first global existence result of weak solutions for the system~\eqref{eq:granularsystem}.
In this paper, we want to study the
system~\eqref{eq:granularsystem} directly, without any lubrication
approximation. 
For that purpose, we take advantage of the link between
model~\eqref{eq:granularsystem} and the model of pressureless gas
dynamics in one space dimension:
\begin{subnumcases}{\label{eq:pressureless}}
\partial_t \rho + \partial_x(\rho u) = 0 \label{eq:pressureless_mass} \\
\partial_t(\rho u) + \partial_x(\rho u^2) = 0. \label{eq:pressureless_mom}
\end{subnumcases} 
We establish a global existence result for
weak solutions to the following system:
\begin{subnumcases}{\label{eq:pressurelessMOD}}
	\partial_t \rho + \partial_x(\rho u) = 0 
\label{eq:pressurelessMOD_mass}\\
	\partial_t(\rho u-\partial_x\gamma) 
		+ \partial_x\big( (\rho u-\partial_x\gamma)u \big) = \rho f 
\label{eq:pressurelessMOD_mom}\\
	0 \leq \rho \leq 1  
\label{eq:pressurelessMOD_rho}\\
	(1-\rho)\gamma = 0, \quad \gamma \leq 0. 
\label{eq:pressurelessMOD_gamma}
\end{subnumcases}

Among the large literature that exists for the pressureless
system~\eqref{eq:pressureless}, we are interested in the recent
results of Natile and Savar\'e~\cite{natile2009} and Cavalletti et
al.~\cite{cavalletti2015} that develop a Lagrangian approach based on
the representation of the density $\rho$ by its monotone rearrangement
$X$, which is the optimal transport between the Lebesgue measure
$\mathcal{L}^1_{|[0,1]}$ and $\rho$; see~\cite{santambrogio2015}.

\bigskip

Let us also mention that the granular system~\eqref{eq:granularsystem}
can be seen as a non-trivial extension of the pressureless Euler
equations under maximal density constraint
\begin{subnumcases}{\label{eq:stickyblocks}}
\partial_t \rho + \partial_x(\rho u) = 0 \\
\partial_t(\rho u) + \partial_x(\rho u^2) + \partial_x \pi = 0 \\
0 \leq \rho \leq 1 \\
(1-\rho) \pi = 0 , \quad \pi \geq 0.
\end{subnumcases}
This system has been first introduced by Bouchut et al.
\cite{bouchut2000} as a model of two-phase flows and then studied by
Berthelin~\cite{berthelin2002} and Wolansky~\cite{wolansky2007}. The
results rely on a discrete approximation generalizing the sticky
particle dynamics used for the pressureless system. Recently,
numerical methods based on optimal transport tools have been developed
for this system; see~\cites{maury2015, preux2016}.

For viscous fluids, i.e., Navier-Stokes systems, a theoretical
existence result can be found in~\cite{perrin2015} in the case where
the maximal density constraint $\rho^*(x)$ is a given function of the
space variable. Recently, Degond et al. have proved
in~\cite{degond2016} the existence of global weak solutions to the
Navier-Stokes system with a time and space dependent maximal
constraint $\rho^*(t,x)$ that is transported by the velocity $u$: it
satisfies the transport equation
\begin{equation}\label{eq:transport_rho*}
\partial_t \rho^* + u \, \partial_x\rho^* = 0.
\end{equation}
Numerical simulations are have been studied
in~\cites{degond2016, degond2017} with applications to crowd dynamics.
This type of heterogeneous maximal constraint may be also relevant for
the dynamics of floating structures; see for instance
Lannes~\cite{lannes2016}.

\medskip

The paper is organized as follows:

\medskip

In Section~\ref{sec:pressureless} we briefly review the literature on
the pressureless gas dynamics and introduce the mathematical tools
linked to a Lagrangian description. In Section~\ref{sec:mainresult} we
explain formally how these tools can be extended to the
system~\eqref{eq:pressurelessMOD} and give our main
existence result. Section~\ref{sec:proof} is devoted to the proof of
this result and Section~\ref{sec:numerics} presents some numerical
simulations. In the last section, we extend finally the result to the
special case of time and space dependent maximal density constraint
that satisfies the transport equation~\eqref{eq:transport_rho*}.

%%%%%%%%%%%%%%%%%%%%%%%%%%%%%%%%%%%%%%%%%%%%%%%%%%%%%%%%%%%%%%%%%%%%%%
\section{Lagrangian approach for the pressureless Euler
equations}{\label{sec:pressureless}} The pressureless gas dynamics
equations, augmented by the assumption of adhesion dynamics, has been
proposed as a simple model for the formation of large scale structures
in the universe such as aggregates of galaxies. It is linked to the
sticky particle system introduced by Zeldovich
in~\cite{zeldovich1970}. The work of Bouchut~\cite{bouchut1994}
highlights the obstacles to proving existence of classical solutions
to~\eqref{eq:pressureless} (concentration phenomena on the density,
lack of uniqueness under classical entropy conditions). Since then,
several different mathematical approaches have been proposed in the
literature for proving the global existence of measure solutions under
suitable entropy conditions (see again~\cite{bouchut1994}), among
which there are approximations by the discrete sticky particles
dynamics \cites{brenier1998, natile2009}, approximation by viscous
regularization \cites{sobolevskii1997, boudin2000} or, more recently,
derivation by a hydrodynamic limit \cite{jabin2016}.

\medskip In particular, Natile and Savaré use \cite{natile2009} an
interesting Lagrangian characterization of the density $\rho$ by its
monotone rearrangement $X$ to show convergence of the discrete sticky
particle system as the number $N$ of particles goes to $+\infty$. To
every probability measure $\rho \in \mathcal{P}_2(\R)$ (i.e., with
finite quadratic moment $\int_\R |x|^2 \,\rho(dx) < +\infty$) there
is associated a unique transport $X \in K$, the closed convex cone of
non-decreasing maps in $L^2(0,1)$, such that
\begin{equation}
\rho_t = (X_t)_\# \mathcal{L}^1_{|[0,1]}.
\end{equation}
Here $\mathcal{L}^1_{|[0,1]}$ is the one-dimensional Lebesgue measure
restricted to the interval $[0,1]$ and $\#$ denotes the
\emph{push-forward} of measures, defined for all Borel maps
$\zeta\,:\,\R \rightarrow [0,\infty]$ by
\begin{equation}
\int_\R \zeta(x) \,\rho_t(dx) = \int_0^1 \zeta\big(X_t(y)\big) \,dy.
\end{equation}
If now $(\rho, u)$ is a solution in the distributional sense
of~\eqref{eq:pressureless}, then $u_t$ can be associated to the
Lagrangian velocity $U_t := \dot{X}_t$ (in the sequel all the
Lagrangian variables will be denoted by capital letters and the
Eulerian ones by the corresponding small letters) through
\begin{equation}
U_t(y) = u_t\big( X_t(y) \big).
\end{equation}
In~\cite{natile2009}, Natile and Savaré show different
characterizations of the transport $X$ associated to an Eulerian
solution of~\eqref{eq:pressureless}, in particular they prove that
\begin{equation}\label{eq:projection_pressureless}
X_t = \mathrm{P}_K\big(\bar{X} + t \bar{U}\big)\quad \text{for all $t \geq 0$,}
\end{equation} 
where $ \mathrm{P}_K$ is the $L^2(0,1)$ projection onto
the closed convex set $K$
and $\bar{X},\bar{U}$ are respectively the monotone rearrangement and the Lagrangian velocity associated to the initial data. The map $\bar{X} + t \bar{U}$ represents the free
motion path, which is at the discrete level the transport
corresponding to the case where the particles do not interact at all.

These arguments have been extended by Brenier et al.
\cite{brenier2013} to systems including an interaction between the
discrete particles. This interaction is represented at the continuous
level by a force $f(\rho)$ in the right-hand side of the momentum
equation~\eqref{eq:pressureless_mom}.

\bigskip

Recently, Cavalletti et al.~\cite{cavalletti2015} have taken advantage
of the formula~\eqref{eq:projection_pressureless} to construct
directly global weak solutions to~\eqref{eq:pressureless} without any
discrete approximation by sticky particles. To this end, they define
for all positive times $t$ the transport $X_t$, associated to an
initial data $(\bar{\rho},\bar{u})$, by
equation~\eqref{eq:projection_pressureless}. The Lagrangian variables
$\bar{X}$ and $\bar{U}$ are defined by
\begin{equation} \label{eq:initial_pessureless}
\bar{\rho} = (\bar{X})_\# \mu, \quad \bar{U} := \bar{u} \circ \bar{X} 
\end{equation}
for a more general reference measure $\mu$ in $\mathcal{P}_2(\R)$ (for
instance $\mu = \bar{\rho}$ and in this case $\bar{X} = \id$). As a consequence
of the contraction property of the projection operator $\mathrm{P}_K$,
the map $t\mapsto X_t$ is Lipschitz continuous and thus differentiable
for a.e. $t$, which allows us to define the Lagrangian velocity $U_t
:= \dot{X}_t$. Cavalletti et al. \cite{cavalletti2015} introduce the
subspace in $L^2(\R,\mu)$ formed by functions which are essentially
constant where $X_t$ is constant:
\begin{equation}
\mathcal{H}_{X_t} = \text{$L^2(\R,\mu)$-closure of} ~\big\{ \varphi \circ X_t
	\colon \varphi \in \mathcal{D}(\R) \big\}.
\end{equation}
This space is a subset of the tangent cone to $K$ at $X_t$, denoted by
$\mathbb{T}_{X_t}K$, in which the Lagran\-gian velocity is contained.
One can then show that $U_t$ is the orthogonal projection of $\bar{U}$
onto the space $\mathcal{H}_{X_t}$:
\begin{equation}\label{eq:projU_pressureless}
U_t = \mathrm{P}_{\mathcal{H}_{X_t}}(\bar{U}).
\end{equation}
This property ensures that there exists, for a.e. $t$, an Eulerian
velocity $u_t \in L^2(\R,\rho_t)$ with the property that $U_t = u_t \circ
X_t$. This is the key argument for recovering the weak formulations of
the gas dynamics
equations~\eqref{eq:pressureless_mass}--\eqref{eq:pressureless_mom} in
the Eulerian formulation.

\bigskip

By comparison, our granular system written under the pressureless
form~\eqref{eq:pressurelessMOD} involves an additional maximal density
constraint $\rho \leq 1$, an additional variable $\gamma$ linked to
this maximal constraint, and an external force $f$. 
%We are led to consider the modified velocity $v$. 
We explain in the next section how
to extend the previous tools in order to deal with these additional
constraints and variables.

%%%%%%%%%%%%%%%%%%%%%%%%%%%%%%%%%%%%%%%%%%%%%%%%%%%%%%%%%%%%%%%%%%%%%%%
\section{Extension to granular flows, main result}{\label{sec:mainresult}}

Before announcing our existence result, we need to explain how to
adapt the Lagrangian tools mentioned above when an external force and
a maximal density constraint is given. A good way to do this is to
come back to the microscopic approach, by nature Lagrangian, developed
by Maury in~\cite{maury2007} for a single sticky particle in contact
with a wall.

\paragraph{Single particle case.} Maury \cite{maury2007} proves by a
vanishing viscosity limit (viscosity of liquid in which the particle
is immersed), the existence of solutions to the hybrid system
\begin{subnumcases}{\label{eq:glueyparticle}}
\dot{q} + \gamma = \bar{u} + \int_0^t{f(s) \,ds}\label{eq:glueyparticle_dyn} \\
q \geq 0, \quad \gamma \leq 0, \quad q\,\gamma = 0, 
\end{subnumcases}
which describes the two possible states of the system: \emph{free}
when $q>0$ (that is, the particle evolves freely under the external
force $f$), and \emph{stuck} whenever $q=0$. In this latter case, the
adhesion potential is activated and is equal to
\begin{equation}\label{eq:micro_ufree}
\uf(t) = \bar{u} + \int_0^t{f(s) \,ds},
\end{equation}
which is the velocity the particle would have if there was no wall on
its trajectory. System~\eqref{eq:glueyparticle} is in fact equivalent
to the following second order system (see~\cite{lefebvre2007})
\begin{subnumcases}{\label{eq:gluey2nd}}
\ddot{q} = f + \lambda \\
\dot{q}(t^+) = \mathrm{P}_{C_{q,\gamma}(t)}\dot{q}(t^-) \\
\spt(\lambda) \subset \{t \colon q(t) = 0 \} \\
\dot{\gamma} = - \lambda  \label{eq:potmicro}\\
q \geq 0, \quad \gamma \leq 0,
\end{subnumcases}
where
$C_{q,\gamma}(t)$ denotes the set of admissible velocities
\[
C_{q,\gamma}(t) = \begin{cases}
~ \{0\} \quad & \text{if} \quad \gamma(t^-) < 0 \\
~ \R^+  \quad & \text{if} \quad \gamma(t^-) = 0,~ q(t)=0 \\
~ \R    \quad & \text{otherwise.}
\end{cases}
\]
It ensures that the particle cannot cross the wall and that it sticks
to the wall as long as $\gamma <0$. By comparison with
system~\eqref{eq:granularsystem}, an analogy can be made between the
variables $q$ and $1-\rho$, between $\lambda$ and $-\partial_x p$ and
thus between $\gamma_\text{micro}$ defined by~\eqref{eq:potmicro} and
$-\partial_x \gamma_\text{macro}$.

\paragraph{Extension of the Lagrangian approach.} Let $\rho_0 \in
\mathcal{P}_2(\R)$ denote the initial density. We assume that $\rho_0$
is absolutely continuous with respect to the Lebesgue measure and that
its density (also denoted $\rho_0$, for simplicity) satisfies the
maximal constraint
\begin{equation}
0 \leq \rho_0 \leq 1 \quad \text{a.e.}
\end{equation}
As suggested by Cavalletti et al.~\cite{cavalletti2015} we set $\bar{\rho} =
\rho_0$ as reference measure. 

\begin{definition}
The set of square-inte\-grable functions with respect to the measure
$\bar{\rho}$ will be denoted $L^2(\R, \bar{\rho})$. Let $\langle \cdot,\cdot
\rangle$ be the induced inner product. The space of $p$-inte\-grable
functions on the domain $\Omega$ for the Lebesgue measure will be
denoted by $L^p(\Omega)$.
\end{definition}

\begin{framed}
\noindent In the following, we will switch freely between absolutely
continuous measures $\rho(dx)$ and their Lebesgue densities $\rho(x)
\,dx$. The meaning will be clear from the context.
\end{framed}

\noindent{\it Set of admissible transports.} To each $\rho_t$ we
associate a monotone transport map $X_t$ through
\begin{equation}
\rho_t = (X_t)_\# \bar{\rho}.
\label{eq:rhoxt}
\end{equation}
To express the maximal density constraint $\rho_t \leq 1$ in terms of
a constraint on the transport map $X_t$, we consider a maximally
compressed density with the same total mass as $\bar{\rho}$, which is a
characteristic function of some interval $\widetilde{I}$ of length
one. For definiteness, we assume this interval to be centered around
the center of mass of $\bar{\rho}$, but the construction is invariant
under translation since constants can be absorbed into the transport
map. Let $\trho$ be the probability measure associated to the
characteristic function of $I_0$. Let $\tX$ be the unique
nondecreasing transport map in $L^2(\R,\bar{\rho})$ (see
\cite{santambrogio2015} Theorem 2.5, for example) such that
\begin{equation}\label{eq:cong_transport} 
\trho = \tX_\# \bar{\rho}. 
\end{equation}
The push forward formula implies that $\partial_y \tX(y) > 0$ for
$\bar{\rho}$-a.e. $y\in\R$ and
\begin{equation}
	\trho(x) = \frac{\bar{\rho}(\tX^{-1}(x))}{\partial_y \tX(\tX^{-1}(x))}
	\quad\text{for a.e. $x\in I_0$;}
\label{E:EUQ}
\end{equation}
see \cite{ambrosio2008} Lemma 5.5.3. In particular, we have
$\bar{\rho}(y)/\partial_y \tX(y) = 1$ for $\bar{\rho}$-a.e. $y\in \R$. The
measure $\rho_t$ in \eqref{eq:rhoxt} is absolutely continuous with
respect to the Lebesgue measure if and only if the approximate
derivative $\partial_y X_t(y)>0$ for $\bar{\rho}$-a.e. $y\in\R$. For
$\rho_t$-a.e. $x\in\R$, we then have
\begin{equation}
\rho_t (x) = \dfrac{\bar{\rho}(X_t^{-1}(x))}{\partial_yX_t(X_t^{-1}(x))}
= \dfrac{\bar{\rho}(X_t^{-1}(x))}{\partial_y\tX(X_t^{-1}(x))} \,\dfrac{\partial_y\tX(X_t^{-1}(x))}{\partial_yX_t(X_t^{-1}(x))}
= \dfrac{\partial_y\tX(X_t^{-1}(x))}{\partial_yX_t(X_t^{-1}(x))}.
\label{eq:fract}
\end{equation}
In order to guarantee the maximal density constraint we are thus led
to consider transport maps $X_t$ such that $\partial_y\tX(y) \leq
\partial_yX_t(y)$ for $\bar{\rho}$-a.e. $y\in\R$. We therefore introduce
the closed convex set of admissible transports maps in $L^2(\R,
\bar{\rho})$ as follows: We define
\begin{equation}
\tK := K + \tX,
\label{eq:kol}
\end{equation}
where $K$ is the cone of monotone (more precisely: non-decreasing)
maps of $L^2(\R, \bar{\rho})$. To the transport map $X_t \in \tK$, we
associate the monotone transport map
\begin{equation}
S_t := X_t -\tX \in K.
\end{equation} 
Note that monotone maps are differentiable a.e., with
nonnegative derivative.

\begin{remark} Coming back to the definition of $\trho$, we
observe that the position of the interval $\widetilde{I}$ does not
matter for the definition of $\tK$ since the translations can be
absorbed in $K$.
\end{remark}

\noindent{\it Formal description of the dynamics.} In
order to define for all times $t$ an appropriate transport map $X_t$,
we need to extend the notion of free transport $\bar{X} + t \bar{U}$
%$X_0+tU_0$ 
used in~\eqref{eq:projection_pressureless} to the case where the external
force $f$ is applied on the system. In particular, we need to extend
the notion of free velocity, which for the pressureless system is
simply the initial velocity $\bar{U}$. In our case, inspired by the
microscopic case~\eqref{eq:micro_ufree}, we are naturally led to set
\begin{equation}\label{eq:freevelocity}
\Uf_t := \bar{U} + \int_0^t{f(s,X_s) \,ds},
\end{equation} 
integrating along the trajectories $t\mapsto X_t(y)$ starting at $y$.
The free trajectory at time $t$ would be then be given by
the formula
\begin{equation}
\Xf_t := \bar{X} + \int_0^t{\Uf_s \, ds},
\end{equation}
and in analogy with \eqref{eq:projection_pressureless}, we consider
\begin{equation}\label{df:Xt}
X_t := \mathrm{P}_{\tK}(\Xf_t) =  \mathrm{P}_{\tK}\left(\bar{X} + \int_0^t{\Uf_s \, ds}\right);
\end{equation}
see~\cite{brenier2013} for a similar formulation. Note
carefully that equations~\eqref{df:Xt}--\eqref{eq:freevelocity} form a
coupled system insofar as the free velocity $\Uf_t$ depends on the
trajectory $X_s$ itself. Establishing the existence and uniqueness of a
solutions is non trivial and requires suitable assumptions on the
external force $f$. We detail this point in
Lemma~\ref{lem:existence-Xt} below.
The associated velocity, formally
defined as $U_t = \frac{d}{dt} X_t$, not only has to belong the
tangent cone to $K$ at $S_t = X_t - \tX$, defined as
\begin{equation}\label{df:tangent_cone}
	\mathbb{T}_{S_t} K := \text{$L^2(\R,\bar{\rho})$-closure of $T_{S_t} K$}
	\quad \text{where} \quad 
	T_{S_t} K := \bigcup_{h>0} h \big(K - S_t \big),
\end{equation}
it  has to be constant on each congested block for a.e. $t$. That is, $U_t$
must belong to the set
\begin{equation}\label{df:H_S}
	\mathcal{H}_{S_t} := \left\{ U \in L^2(\R,\bar{\rho}) \colon
		\text{$U$ is a.e. constant on maximal intervals in $\Omega_{S_t}$} \right\}
\end{equation}
where $\Omega_{S_t}$ is the union of all non-trivial
intervals on which $S_t$ is constant. In analogy to the microscopic
case~\eqref{eq:glueyparticle_dyn}, we define an adhesion potential
$\Gamma_t$; see \eqref{E:DEFGMM}.

\paragraph{Definition of weak solutions and main result.} Here is our
solution concept.

\begin{definition}\label{D:SOLNN}
Given suitable initial data $(\bar{\rho},\bar{u})$, a
triple $(\rho, u, \gamma)$ is called a weak solution of
system~\eqref{eq:pressurelessMOD} provided that
\begin{itemize}
\item $(\rho, u, \gamma)$ satisfies
\begin{gather}
	\rho_t \in \mathcal{P}_2(\R),
	\;
	u_t \in L^2(\R, \rho_t)
	\quad \text{for a.e. $t\in[0,T]$},
\label{eq:one}\\
	\gamma \in L^\infty\big( [0,T];W^{1,1}(\R) \big);
\label{eq:two}
\end{gather}
\item the density constraint~\eqref{eq:pressurelessMOD_rho} and the
  exlusion principle~\eqref{eq:pressurelessMOD_gamma} hold almost
  everywhere;
\item equations~\eqref{eq:pressurelessMOD_mass}
  and~\eqref{eq:pressurelessMOD_mom} are satisfied in the sense of
  distributions:
\begin{align}
	& \int_0^T\int_\R \Big(\partial_t \xi(t,x) + u_t(x) \partial_x \xi(t,x) \Big)
		\rho_t(x) \,dx \,dt 
\label{eq:weak_mass}\\
	& \qquad\qquad
		= - \int_{\R}{\xi(0,x) \, \bar\rho(x) \,dx} 
	\quad\text{for all $\xi \in \mathcal{C}^\infty_c\big([0,T) \times \R \big)$,} 
\nonumber\\
	& \int_0^T\int_\R \Big(\partial_t \varphi(t,x) + u_t(x) \partial_x \varphi(t,x)\Big)
		\big(\rho_t(x)u_t(x)-\partial_x \gamma_t(x)\big) \,dx \,dt 
\label{eq:weak_momentum}\\
	& \qquad\qquad
		+ \int_0^T\int_\R \varphi(t,x) \,\rho_t(x) f_t(x) \,dx \,dt
\nonumber\\
	& \qquad\qquad
		= - \int_\R \varphi(0,x) \, \bar\rho(x) \bar u(x) \,dx
	\quad\text{for all $\varphi \in \mathcal{C}^\infty_c\big([0,T) \times \R \big)$.}
\nonumber
\end{align}
\end{itemize}
\end{definition}

\begin{remark}\label{E:INVELO}
As explained before, the velocity must be compatible with the flow
configuration in the sense that it is non-decreasing on congested
blocks; recall \eqref{df:H_S}. We can write
\[
	\bar\rho(x) \bar u(x) = \bar\rho(x) u_0(x)-\partial_x\gamma_0(x),
\]
where $u_0$ is the $L^2(\R,\bar\rho)$-projection of $\bar u$ onto the
tangent cone $\mathbb{T}_{S_0} K$ and the adhesion potential
$\gamma_0$ is defined in analogy to \eqref{E:DEFGMM}.
\end{remark}

\begin{theorem}	
Let $T > 0$ and external force $f \in L^\infty\big(0,T;\Lip(\R)\cap
L^\infty(\R)\big)$ be given. Suppose that $\bar\rho \in
\mathcal{P}_2(\R)$ with $\bar\rho \ll \mathcal{L}^1$ and $0 \leq \bar\rho
\leq 1$ a.e., and that $\bar u \in L^2(\R,\bar\rho)$. Define
\begin{equation}
	\bar X := \id, \; \bar{U} := \bar u, 
	\quad\text{so that}\quad
	X_0=X_0^\text{free}=\id, \; \Uf_0=\bar u, \; \rho_0 := \bar\rho.
\label{eq:initiald}
\end{equation}
There exists a curve $[0,T] \ni t \mapsto X_t \in \tK$ that is
differentiable for a.e. $t \in (0,T)$ and solves the coupled system of
equations \eqref{eq:freevelocity}--\eqref{df:Xt}. The
following quantities are well-defined:
\begin{equation}
	U_t(y) := \dot{X}_t(y), \; 
	\Gamma_t(y) := \int_{-\infty}^{y}{\Big(U_t(z) - \Uf_t(z)\Big) \bar{\rho}(z) \,dz} 
\label{E:DEFGMM}
\end{equation}
for $y\in\R$ and a.e. $t \in (0,T)$. There exist $(u_t,\gamma_t)
\in\mathcal{L}^2(\R, \rho_t)\times W^{1,1}(\R)$, such that
\[ 
	U_t = u_t\circ X_t, \; \Gamma_t = \gamma_t \circ X_t
	\quad\text{where}\quad
	\rho_t := (X_t)_\# \bar{\rho}.
\]
The triple $(\rho, u, \gamma)$ is a global weak solution of
system~\eqref{eq:pressurelessMOD}.
\end{theorem}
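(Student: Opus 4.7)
The proof unfolds in four stages, all carried out in the Lagrangian setting. First, I would settle the coupled fixed-point system \eqref{eq:freevelocity}--\eqref{df:Xt} by a Picard contraction. Given a candidate $X \in C([0,\tau]; \tK)$, define $\mathcal{F}(X)_t := \mathrm{P}_{\tK}\bigl(\bar X + \int_0^t \Uf_s\,ds\bigr)$, with $\Uf_s$ built from $X$ via \eqref{eq:freevelocity}. Since $\mathrm{P}_{\tK}$ is $1$-Lipschitz on $L^2(\R,\bar\rho)$ and $f$ is Lipschitz in $x$ uniformly in $t$, $\mathcal{F}$ contracts on $C([0,\tau]; L^2(\R,\bar\rho))$ for $\tau$ small, and the $L^\infty$ bound on $f$ allows one to iterate up to $T$; this is Lemma~\ref{lem:existence-Xt}. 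The same $1$-Lipschitz property of $\mathrm{P}_{\tK}$, combined with the uniform $L^2(\R,\bar\rho)$-bound on $\Uf_t$, yields Lipschitz continuity of $t \mapsto X_t$ into $L^2(\R,\bar\rho)$, hence differentiability for a.e. $t$, so that $U_t := \dot X_t$ is well-defined.

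Second, I would identify this $U_t$ via the chain rule for metric projections onto closed convex sets: for a.e. $t$,
\[
U_t = \lim_{h \to 0^+} \frac{X_{t+h} - X_t}{h} = \mathrm{P}_{\mathbb{T}_{S_t} K}\bigl(\Uf_t\bigr).
\]
The further refinement that $U_t$ actually lies in the smaller subspace $\mathcal{H}_{S_t}$ of \eqref{df:H_S} --- i.e., that $U_t$ is $\bar\rho$-a.e.\ constant on maximal intervals where $S_t$ is constant --- encodes the incompressibility of congested blocks and is, in my view, the main technical obstacle. I would derive it from the variational characterization $\la \Xf_t - X_t, Z - X_t \ra \leq 0$ for all $Z \in \tK$, tested against perturbations supported on a single congested interval, where one can symmetrically redistribute mass without leaving $\tK$.

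Third, since $U_t$ is $\bar\rho$-a.e.\ constant on level sets of $X_t$, a measure-theoretic factorization yields $u_t \in L^2(\R,\rho_t)$ with $U_t = u_t \circ X_t$. For the adhesion potential, the projection identity makes the integrand $(U_t - \Uf_t)\bar\rho$ in \eqref{E:DEFGMM} of zero mean on every maximal non-trivial level interval of $S_t$, so $\Gamma_t$ is constant on the complement of $\Omega_{S_t}$ and descends to a well-defined $\gamma_t$ on $\R$ with $\Gamma_t = \gamma_t \circ X_t$; the bound $\gamma_t \in W^{1,1}(\R)$ follows from the Eulerian identity $\partial_x\gamma_t = \rho_t(u_t - \uf_t)$ and Cauchy--Schwarz in $L^2(\rho_t)$. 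The sign $\gamma_t \leq 0$ and exclusion $(1-\rho_t)\gamma_t = 0$ are then read off the variational characterization of $X_t = \mathrm{P}_{\tK}(\Xf_t)$ by testing against admissible perturbations straddling congested/non-congested boundaries.

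Fourth, the weak equations \eqref{eq:weak_mass}--\eqref{eq:weak_momentum} are recovered by transporting test functions along $X_t$. For continuity, differentiating $t \mapsto \int \xi(t, X_t(y))\,\bar\rho(y)\,dy$ in $t$ and using $U_t = u_t \circ X_t$ together with $\rho_t = (X_t)_\#\bar\rho$ produces \eqref{eq:weak_mass}. For momentum, the crucial algebraic observation coming from \eqref{E:DEFGMM} is
\[
\rho_t u_t - \partial_x\gamma_t = \rho_t \uf_t,
\]
which reduces \eqref{eq:pressurelessMOD_mom} to a transport equation for $\uf_t$; since $\frac{d}{dt}\Uf_t = f(t, X_t)$ along trajectories by \eqref{eq:freevelocity}, differentiating $\int \varphi(t, X_t)\,\Uf_t\,\bar\rho\,dy$ in $t$ and pushing forward to $\R$ yields \eqref{eq:weak_momentum}.
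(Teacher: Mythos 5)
Your overall architecture matches the paper's closely: Picard contraction for the coupled system $\eqref{eq:freevelocity}$--$\eqref{df:Xt}$, Lipschitz-in-time bound giving a.e.\ differentiability of $X_t$, identification of $U_t$ as a constrained projection of $\Uf_t$, factorization through $X_t$ to recover $u_t$ and $\gamma_t$, and push-forward of transported test functions to obtain $\eqref{eq:weak_mass}$--$\eqref{eq:weak_momentum}$. The observation $\rho_t u_t - \partial_x\gamma_t = \rho_t\, u^{\text{free}}_t$ is a nice compact restatement of the paper's Lagrangian identity $\bar\rho\,\bar U = \bar\rho\bigl(U_t - \int_0^t f(s,X_s)\,ds\bigr) - \partial_y\Gamma_t$, and the $W^{1,1}$ bound on $\gamma_t$ via Cauchy--Schwarz against the probability measure $\rho_t$ is exactly the right argument.

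The soft spot is your Step~2. First, the chain rule for metric projections does not give $U_t = \mathrm{P}_{\mathbb{T}_{S_t}K}(\Uf_t)$; the correct Zarantonello-type formula projects onto $\mathbb{T}_{S_t}K \cap [\Xf_t - X_t]^\perp$, and the orthogonality constraint is not a cosmetic refinement but precisely what selects the block-averaged velocity over a merely monotone one (think of two blocks colliding: $\Uf_t$ is already monotone, hence fixed by $\mathrm{P}_{\mathbb{T}_{S_t}K}$, whereas the true $U_t$ is its $\bar\rho$-average on the merged block). Second, your proposed mechanism for $U_t \in \mathcal{H}_{S_t}$ --- testing $\langle\Xf_t - X_t, Z - X_t\rangle \le 0$ against ``symmetric mass redistributions'' on a congested interval $I$ --- does not go through: on such $I$ the slack $\partial_y S_t$ is identically zero, so any perturbation $Z = X_t + \varepsilon\psi$ with $\psi$ non-monotone on $I$ immediately leaves $\tK$, and monotone $\psi$ only reproduce the normal-cone inequality, which constrains $X_t$, not $U_t$. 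The paper's actual argument (Proposition~\ref{prop:space_velocity}) is cleaner and worth absorbing: since $U_t$ is a \emph{two-sided} strong derivative of the Lipschitz curve $t\mapsto X_t\in\tK$, it lies in $\mathbb{T}_{S_t}K \cap (-\mathbb{T}_{S_t}K)$, and unpacking the cone via approximating sequences $W^k_t - \lambda^k S_t$ with $W^k_t\in K$ shows $U_t$ is simultaneously a.e.\ non-decreasing and a.e.\ non-increasing on each $L^z_t$, hence constant; the projection identity $U_t = \mathrm{P}_{\mathcal{H}_{S_t}}(\Uf_t)$ is then proved directly from the variational inequality satisfied by $X_{t+h_n}$ (Proposition~\ref{prop:incl_Hx} and the Proposition following it), bypassing any appeal to a general projection chain rule.
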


\begin{remark} The assumption $f \in
L^\infty\big(0,T;\Lip(\R)\cap L^\infty(\R)\big)$ can be relaxed to
include a larger class of forces. We will stick to it here to simplify
the presentation.

Notice that $X_0 = \id $ satisfies \eqref{eq:fract} for $t=0$,
hence $X_0 \in \tK$ as expected.
\end{remark}

\section{Construction of global weak solutions}{\label{sec:proof}}

Our proof consists of three steps. First we establish
existence and uniqueness of $X_t$ (defined by~\eqref{df:Xt}) and
$U_t$. We will prove that the velocity $U_t$ is admissible in the
sense that it belongs to the set $\mathcal{H}_{S_t}$ defined
in~\eqref{df:H_S}. Introducing next the adhesion potential as
in~\eqref{E:DEFGMM}, we show in Subsection~\ref{sec:Gamma} that it is
non-positive and supported in the congested domain. Finally, we check
in Subsection~\ref{sec:weak_form} that the Eulerian variables
$(\rho,u,\gamma)$ associated to~$(X_t,U_t,\Gamma_t)$ with $t\in[0,T]$
satisfy the weak
formulations~\eqref{eq:weak_mass}--\eqref{eq:weak_momentum} of
system~\eqref{eq:pressurelessMOD}.

\subsection{Definition of the transport and velocity} Let
us begin by justifying the fact that we can define in a unique manner
$X_t$ for all times.

\begin{lemma}\label{lem:existence-Xt}
For all $t\in [0,T]$, there exists a unique solution $(X_t, \Uf_t)$
to~\eqref{eq:freevelocity}--\eqref{df:Xt}.
\end{lemma}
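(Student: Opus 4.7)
The plan is to recast the coupled system \eqref{eq:freevelocity}--\eqref{df:Xt} as a fixed-point equation on a suitable Banach space of curves and to apply the contraction mapping theorem. Let $E := C\big([0,T]; L^2(\R,\bar{\rho})\big)$, and for a fixed parameter $\lambda>0$ (to be chosen) equip $E$ with the equivalent weighted norm
\[
	\|X\|_\lambda := \sup_{t \in [0,T]} e^{-\lambda t} \|X_t\|_{L^2(\R,\bar\rho)}.
\]
Given any $X \in E$, define the associated free velocity $\Uf^X_t := \bar{U} + \int_0^t f(s,X_s) \, ds$ and then set
\[
	\Phi(X)_t := \mathrm{P}_{\tK}\!\Big(\bar X + \int_0^t \Uf^X_s \, ds\Big), \qquad t \in [0,T].
\]
The lemma amounts to showing that $\Phi$ admits a unique fixed point in $E$.

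First I would verify that $\Phi$ maps $E$ into itself. Since $\bar{\rho} \in \mathcal{P}_2(\R)$, the identity $\bar{X} = \id$ lies in $L^2(\R,\bar{\rho})$; since $f \in L^\infty(0,T;L^\infty(\R))$, the Bochner integrals $\int_0^t f(s,X_s) \, ds$ and $\int_0^t \Uf^X_s \, ds$ are well-defined in $L^2(\R,\bar{\rho})$ and continuous in $t$; and since $\tK$ is nonempty, closed and convex in $L^2(\R,\bar{\rho})$ (it contains $\bar X = \id$ as noted in the remark after the theorem), the projection $\mathrm{P}_{\tK}$ is a well-defined 1-Lipschitz map. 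Continuity in $t$ is preserved by projection, so $\Phi(X) \in E$.

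The core estimate is the contraction property. Let $X,Y \in E$. Using the Lipschitz assumption on $f$,
\[
	|\Uf^X_s(y) - \Uf^Y_s(y)| \leq \Lip(f) \int_0^s |X_r(y) - Y_r(y)| \, dr
	\quad \text{for $\bar{\rho}$-a.e.\ $y$,}
\]
and therefore, by Minkowski's integral inequality in $L^2(\R,\bar{\rho})$,
\[
	\|\Uf^X_s - \Uf^Y_s\|_{L^2(\R,\bar\rho)} \leq \Lip(f) \int_0^s \|X_r - Y_r\|_{L^2(\R,\bar\rho)} \, dr.
\]
Combining this with the 1-Lipschitz property of $\mathrm{P}_{\tK}$,
\[
	\|\Phi(X)_t - \Phi(Y)_t\|_{L^2(\R,\bar\rho)}
	\leq \Lip(f) \int_0^t \int_0^s \|X_r - Y_r\|_{L^2(\R,\bar\rho)} \, dr \, ds.
\]
Bounding $\|X_r - Y_r\|_{L^2(\R,\bar\rho)} \leq e^{\lambda r} \|X-Y\|_\lambda$ and integrating yields, for every $t\in[0,T]$,
\[
	e^{-\lambda t}\|\Phi(X)_t - \Phi(Y)_t\|_{L^2(\R,\bar\rho)} \leq \frac{\Lip(f)}{\lambda^2}\|X-Y\|_\lambda,
\]
so $\Phi$ is a strict contraction on $(E, \|\cdot\|_\lambda)$ as soon as $\lambda^2 > \Lip(f)$. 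The Banach fixed-point theorem then delivers a unique $X \in E$ with $\Phi(X) = X$, and the corresponding $\Uf$ defined by \eqref{eq:freevelocity} is then uniquely determined. Since $X_t \in \tK$ by construction, this produces the desired solution of \eqref{eq:freevelocity}--\eqref{df:Xt}.

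The main technical subtlety is the coupling between $X$ and $\Uf$: because $\Uf^X_t$ involves evaluating $f$ along the whole past trajectory, the natural Lipschitz estimate produces a double time integral, which would only give contractivity on a short time interval in the uniform norm. The use of the exponential weight $e^{-\lambda t}$ is what lets one close the argument in a single step on the whole interval $[0,T]$, and is the only non-routine element of the proof.
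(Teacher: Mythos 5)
Your proof is correct and follows essentially the same fixed-point approach as the paper: the same weighted norm on $C([0,T];L^2(\R,\bar\rho))$, the same map $\mathrm{P}_{\tK}(\bar X + t\bar U + \int_0^t\int_0^\tau f(s,X_s)\,ds\,d\tau)$, and the same double-integral contraction estimate; the paper simply fixes the exponential weight $2\sqrt{k}$ (giving contraction constant $1/4$) rather than leaving $\lambda$ free with $\lambda^2 > \Lip(f)$. Your added verification that $\Phi$ maps $E$ into itself is a routine step the paper omits.
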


\begin{proof}
Let $\E := \mathcal{C}([0, T], L^2(\R,\bar\rho))$ endowed with the norm
\[ 
	\|X\|_{\E} = \max_{t \in [0,T]} e^{-2 \sqrt{k}t} \|X_t\|_{L^2(\R, \bar{\rho})} 
\]
where $k$ is the Lipschitz constant of the external force $f$.
We define a map $\mathcal{T}$ by
\[ 
	\mathcal{T}(X)(t) := \mathrm{P}_{\tK}\left(\bar X + t \bar{U} 
		+ \int_0^t \int_0^\tau {f(s,X_s) \, ds \, d\tau} \right), \quad t\in [0,T],
\]
for all $X \in \E$. To prove the existence of a unique solution
to~\eqref{eq:freevelocity}--\eqref{df:Xt} we will show that the map
$\mathcal{T}$ is a contraction on $\E$. Consider $X^1, X^2 \in \E$
starting at $t= 0$ from $\bar X$ with velocity $\bar{U}$. Thanks to the contraction property of the projection map we
have
\begin{align*}
\| \mathcal{T}(X^1)(t) - \mathcal{T}(X^2)(t) \|_{L^2(\R, \bar{\rho})}
& \leq \left\| \int_0^t\int_0^\tau{\big(f(s,X^1_s) -f(s,X^2_s) \big) \,ds \, d\tau} \right\|_{L^2(\R, \bar{\rho})} \\
& \leq \int_0^t\int_0^\tau { \|f(s,X^1_s) -f(s,X^2_s) \|_{L^2(\R, \bar{\rho})} \,ds \,d\tau } \\
& \leq k \int_0^t\int_0^\tau { \|X^1_s - X^2_s \|_{L^2(\R, \bar{\rho})} \,ds \,d\tau } \\
& \leq k \|X^1 - X^2 \|_{\E} \int_0^t\int_0^\tau { e^{2\sqrt{k}s} \,ds \,d\tau  } \\
& \leq \dfrac{1}{4} e^{2\sqrt{k}t} \|X^1 - X^2 \|_{\E}
\end{align*}
for all $t \in [0,T]$. We have therefore
\[  
	\| \mathcal{T}(X^1) - \mathcal{T}(X^2) \|_{\E} \leq \dfrac{1}{4} \|X^1 - X^2 \|_{\E}.  
\]
Applying the Banach Fixed Point Theorem, we then conclude that there
exists a unique map $t \mapsto X_t$ solution of~\eqref{df:Xt} as well
as a unique $\Uf_t$ for all times.
\end{proof}

We now recall two useful lemmas proved in~\cite{cavalletti2015}.

\begin{lemma}[Lemma 3.1~\cite{cavalletti2015}]\label{L:ONE}
For given $S \in L^2(\R, \bar{\rho})$ monotone, define $\Pi_{S} :=
(\id, S)_\# \bar{\rho}$. Then there exists a Borel set $N_{S}$ such that
$\bar{\rho}(N_{S}) = 0$ and
\[
	(y, S(y)) \in \spt \Pi_{S} \quad \text{for all}~ y \in \R \setminus N_{S}. 
\]
\end{lemma}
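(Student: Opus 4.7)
The plan is to identify the exceptional set explicitly as $N_S := \{y\in\R : (y,S(y))\notin \spt\Pi_S\}$ and show two things: first, that $N_S$ is Borel, and second, that $\bar\rho(N_S)=0$. Both follow from rather soft measure-theoretic facts; the content of the lemma is essentially bookkeeping, but one has to take care that the graph map is measurable so that pushforward manipulations apply.

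First I would record that since $S \in L^2(\R,\bar\rho)$ is monotone (non-decreasing, after choosing the canonical representative), it has at most countably many discontinuities and is therefore Borel measurable. Consequently the graph map
\[
	\Phi : \R \longrightarrow \R^2, \qquad \Phi(y) := (y, S(y)),
\]
is Borel measurable, and $\Pi_S = \Phi_\#\bar\rho$ is a well-defined Borel probability measure on $\R^2$.

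Next I would invoke the definition of the support of a Borel measure on a separable metric space: $\spt\Pi_S$ is the (unique) smallest closed set $F\subset\R^2$ with $\Pi_S(\R^2\setminus F) = 0$, which exists because $\R^2$ is second countable (one removes a countable base of open null balls from $\R^2$). In particular $\spt\Pi_S$ is closed, hence $\R^2 \setminus \spt\Pi_S$ is open and therefore Borel. Then
\[
	N_S = \Phi^{-1}\bigl(\R^2 \setminus \spt\Pi_S\bigr)
\]
is a Borel subset of $\R$, and by the pushforward definition,
\[
	\bar\rho(N_S)
	= \bar\rho\bigl(\Phi^{-1}(\R^2\setminus\spt\Pi_S)\bigr)
	= \Pi_S(\R^2\setminus\spt\Pi_S) = 0.
\]
For every $y\in\R\setminus N_S$ we have $(y,S(y))\in\spt\Pi_S$ by construction, which is exactly the claim.

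The only place where care is genuinely needed is the Borel measurability of $S$, which underpins both the definition of $\Pi_S$ and the measurability of $N_S$; this is the step I would expect to be the main (albeit minor) obstacle, and it is precisely here that the monotonicity hypothesis on $S$ is used. The remainder is a direct unpacking of the definitions of pushforward and support.
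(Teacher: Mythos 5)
Your proof is correct. Note that the paper itself does not prove this lemma: it is recalled verbatim from Cavalletti, Sedjro, and Westdickenberg (their Lemma 3.1) with no argument given, so there is no in-paper proof to compare against. Your argument --- set $\Phi(y):=(y,S(y))$, take $N_S:=\Phi^{-1}\bigl(\R^2\setminus\spt\Pi_S\bigr)$, observe that the complement of the support is open and $\Pi_S$-null (second countability of $\R^2$), and pull back through the pushforward identity --- is the natural one and, to the best of my knowledge, coincides with the argument in the cited reference. One small interpretive remark: you attribute the Borel measurability of $S$ to monotonicity, but any element of $L^2(\R,\bar{\rho})$ already admits a Borel representative, so measurability is not where monotonicity is genuinely needed. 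The real role of monotonicity here is to single out a canonical (everywhere-defined) representative of the $L^2$ equivalence class for which the pointwise assertion ``$(y,S(y))\in\spt\Pi_S$ for all $y\notin N_S$'' is meaningful, and of course it is used downstream when the lemma is applied together with Lemma~\ref{lem:Ot}.
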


\begin{lemma}[Steps 1 and 2 of Lemma 3.7~\cite{cavalletti2015}]\label{lem:Ot}
Assume that $N_S$ is the $\bar{\rho}$-null set associated to a monotone
map $S \in L^2(\R,\bar{\rho})$, as introduced in the previous lemma. Let
\begin{gather*} 
	L^z := \left\{ y \in \R \setminus N_S \colon S(y) = z \right\},
\\
	\mathcal{O} := \left\{ z\in \R \colon \text{$L^z$ has more than one element} \right\}.
\end{gather*}
The set $\mathcal{O}$ is at most countable and $S$ is injective on $\R
\setminus \bigcup_{z\in \mathcal{O}} L^z$.
\end{lemma}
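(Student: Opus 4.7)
The plan is to exploit the monotonicity of $S$ to convert every $z \in \mathcal{O}$ into a non-degenerate interval in $\R$ on which $S$ is constantly equal to $z$. Countability of $\mathcal{O}$ will then follow from the classical fact that a pairwise disjoint collection of non-degenerate intervals in $\R$ is at most countable, and injectivity on the complement will follow since the only way two distinct points can share an $S$-value is to sit on such a plateau.

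First I would observe that for each $z \in \mathcal{O}$, any two points $y_1 < y_2$ in $L^z$ force $S \equiv z$ on the whole interval $[y_1, y_2]$: if $y \in (y_1, y_2)$, monotonicity yields $z = S(y_1) \leq S(y) \leq S(y_2) = z$. Hence to every $z \in \mathcal{O}$ I may associate a non-degenerate open interval $I^z \subset \R$ on which $S$ is constantly equal to $z$. For distinct $z \neq z'$ in $\mathcal{O}$, the intervals $I^z$ and $I^{z'}$ must be disjoint, since any common point would force $S$ to take both values simultaneously. Selecting a rational from each $I^z$ then injects $\mathcal{O}$ into $\mathbb{Q}$, which proves the first assertion.

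For the second assertion I would argue by contradiction. Suppose $y_1, y_2$ belong to $\R \setminus \bigcup_{z \in \mathcal{O}} L^z$ with $y_1 \neq y_2$ and $S(y_1) = S(y_2) = z^*$. Assuming both lie outside the null set $N_S$ (as is natural in the Lagrangian framework where one works $\bar{\rho}$-a.e.), the definition of $L^{z^*}$ gives $y_1, y_2 \in L^{z^*}$, so that $L^{z^*}$ has at least two elements and hence $z^* \in \mathcal{O}$. But then $y_1$ and $y_2$ both belong to $\bigcup_{z \in \mathcal{O}} L^z$, contradicting the assumption. Therefore $S$ is injective on the stated set, modulo the $\bar{\rho}$-null set $N_S$.

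There is no genuine obstacle here: the lemma is essentially an elementary structural fact about monotone functions in one variable. The only point that requires a little care is the bookkeeping with the $\bar{\rho}$-null set $N_S$. Strictly speaking, $\R \setminus \bigcup_{z \in \mathcal{O}} L^z$ contains $N_S$, and on $N_S$ pointwise injectivity could fail; however, since $\bar{\rho}(N_S) = 0$, the conclusion that is actually used later (injectivity $\bar{\rho}$-a.e. on this set, together with the at-most-countable indexing of the plateaus by $\mathcal{O}$) is unaffected.
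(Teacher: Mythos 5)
The paper does not prove this lemma; it simply recalls it as Steps~1 and~2 of Lemma~3.7 of Cavalletti, Sedjro and Westdickenberg \cite{cavalletti2015}, so there is no in-paper argument to compare against. Your proof is the standard elementary one and is essentially correct, with two small points worth tightening. For the disjointness of the intervals $I^z$, your phrase ``a common point would force $S$ to take both values'' evaluates $S$ at a point that may lie in $N_S$; a cleaner route that never leaves $\R\setminus N_S$ is to observe that monotonicity linearly orders the level sets: if $z<z'$ then every $y\in L^z$ is strictly less than every $y'\in L^{z'}$, so $\sup L^z\leq \inf L^{z'}$, and the non-degenerate open intervals $(\inf L^z,\sup L^z)$ for $z\in\mathcal{O}$ are pairwise disjoint; picking a rational in each then gives countability. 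For injectivity, you are right to flag that the statement read literally includes $N_S$ (since $\bigcup_z L^z\subset\R\setminus N_S$), where pointwise injectivity is not guaranteed; the intended and used meaning is injectivity on $(\R\setminus N_S)\setminus\bigcup_{z\in\mathcal{O}}L^z$, which is exactly what your case analysis establishes and is what the paper actually invokes (e.g.\ in the proof of Proposition~\ref{prop:space_velocity}, where points are always taken in $L^x_t\setminus B$ with $B\supset N$).
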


\begin{remark}
Lemma~\ref{L:ONE} shows that the support of the transport
plan $\Pi_S$ induced by a map $S$ is supported on the graph of $S$
(which is a subset of the product space $\R\times\R$) up to a
negligible set. This technical fact will be needed in the proof of
Proposition~\ref{prop:space_velocity} below.
\end{remark}

\begin{remark}\label{R:EXPLAN}
To understand Lemma~\ref{lem:Ot}, recall that we are considering
maps $X_t$ in the cone $\tK := K + \tX$, where $K$ denotes
the cone of non-decreasing maps of $\L^2(\R, \bar{\rho})$ and $\tX$ is a
fixed monotone map. Because of
\eqref{eq:fract}, the density $\rho_t := (X_t)_\# \bar{\rho}$ satisfies
the constraint
\[
	\rho_t(x) = 1
	\quad\Longleftrightarrow\quad
	\partial_y\tX_t(y) = \partial_yX_t(y)
\]
with $x=X_t(y)$. This is equivalent to the condition $\partial_y
S_t(y) = 0$ where $S_t := X_t - \tX$. Recall that $S_t$ is
non-decreasing. We are thus led to consider points where $S_t$ is
constant on some open neighborhood. Applying Lemma~\ref{L:ONE} with $S
\equiv S_t$ and denoting by $L^z_t, \mathcal{O}_t$ the corresponding
sets defined above, we observe that these sets are precisely given by
$L^z_t$, provided $L^z_t$ has more than one point. By monotonicity of
$S_t$, any such $L^z_t$ must be an interval. There are at most
countably many. For any such $z$ we have $X_t(y) = z+\tX(y)$ for a.e.
$y\in L^z_t$, thus
\begin{equation}
	\rho_t(x) = 1
	\quad\text{for a.e. $x \in \big\{ z+\tX(y) \colon y\in L^z_t \big\}$.}
\label{eq:congz}
\end{equation}
This defines one congested zone. Note that $\partial_y\tX(y)>0$ for
$\bar{\rho}$-a.e. $y\in\R$ so that $\tX$ is strictly increasing. The
congested zone defined in \eqref{eq:congz} has positive length since
$L^z_t$ contains an open interval. Consequently, there can be at most
countably many congested zones. Let
\[
	\Omega_{S_t} := \bigcup_{z\in\mathcal{O}_t} \big\{ z+\tX(y) \colon y\in L^z_t \big\}.
\]
\end{remark}

\begin{proposition}\label{prop:space_velocity}
The velocity $U_t := \frac{d}{dt}X_t$ exists and belongs to
$\H_{S_t}$ for a.e. $t \in (0,T)$.
\end{proposition}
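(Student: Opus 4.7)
The first step will be to establish that the curve $t \mapsto X_t$ is Lipschitz from $[0,T]$ into $L^2(\R,\bar\rho)$. Using the fixed-point formulation of Lemma~\ref{lem:existence-Xt} and the assumption $f \in L^\infty\big(0,T;\Lip(\R)\cap L^\infty(\R)\big)$, the free velocity $\Uf_t = \bar U + \int_0^t f(s,X_s)\,ds$ is uniformly bounded in $L^2(\R,\bar\rho)$ on $[0,T]$, and therefore $\Xf_t = \bar X + \int_0^t \Uf_\tau\,d\tau$ is Lipschitz. Composing with the $1$-Lipschitz contraction $\mathrm{P}_{\tK}$ yields a Lipschitz curve $t \mapsto X_t$ valued in the separable Hilbert space $L^2(\R,\bar\rho)$, which is therefore Fr\'echet differentiable at a.e. $t \in (0,T)$. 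This produces $U_t := \dot X_t = \dot S_t$ for a.e. $t$.

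The heart of the proof is the second step: showing that at such a $t$, the velocity $U_t$ is $\bar\rho$-a.e. constant on every congested interval $L^z_t$ described in Remark~\ref{R:EXPLAN}. The idea is to exploit, simultaneously, the monotonicity of $S_{t+h}$ for $h>0$ and of $S_{t-h}$ for $h>0$, together with the fact that $S_t\equiv z$ on $L^z_t$. I would choose $h_n \to 0^+$ along which the two one-sided difference quotients $(S_{t+h_n} - S_t)/h_n$ and $(S_t - S_{t-h_n})/h_n$ converge to $U_t$ both in $L^2$ and, after one more extraction, $\bar\rho$-a.e.; call $N$ the associated $\bar\rho$-null set. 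For any $y_1 < y_2$ in $L^z_t \setminus N$, monotonicity of $S_{t+h_n}$ gives $(S_{t+h_n}(y_2)-S_{t+h_n}(y_1))/h_n \geq 0$, whose limit is $U_t(y_2)-U_t(y_1) \geq 0$; symmetrically, monotonicity of $S_{t-h_n}$ gives $-(S_{t-h_n}(y_2)-S_{t-h_n}(y_1))/h_n \leq 0$, with limit $U_t(y_2)-U_t(y_1) \leq 0$. Hence $U_t(y_1)=U_t(y_2)$, and $U_t$ is $\bar\rho$-a.e. constant on $L^z_t$.

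To conclude, I would invoke Lemma~\ref{lem:Ot} applied with $S\equiv S_t$: the nontrivial constancy intervals of $S_t$ form an at most countable family, so $\Omega_{S_t}$ is the disjoint union of at most countably many maximal constancy intervals. The previous step applies to each of them, and taking a countable union of the exceptional null sets keeps the total set $\bar\rho$-negligible. By definition \eqref{df:H_S}, this places $U_t$ in $\mathcal{H}_{S_t}$ for a.e. $t$, which is the claim.

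The main technical obstacle I anticipate is the passage between the $L^2$-level differentiability provided by Lipschitz regularity and the pointwise-in-$y$ inequalities needed to extract two-sided monotonicity on a congested block; the remedy is the joint subsequence extraction described above, together with the countability of congested blocks furnished by Lemma~\ref{lem:Ot}. Everything else reduces to uniform-in-$t$ estimates on $\Uf_t$ and standard properties of Lipschitz curves with values in a separable Hilbert space.
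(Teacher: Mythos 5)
Your argument follows the same route as the paper's: Lipschitz regularity of $t\mapsto X_t$ gives a.e.\ strong differentiability, and then the two-sided monotonicity coming from the left and right difference quotients forces $U_t$ to be $\bar\rho$-a.e.\ constant on each congested block $L^z_t$, after which Lemma~\ref{lem:Ot} provides countability. The paper phrases the second step through the tangent cone $\mathbb{T}_{S_t}K$ and $-\mathbb{T}_{S_t}K$, writing elements as limits of $W_t^k-\lambda^k S_t$ with $W_t^k\in K$, $\lambda^k>0$; your direct use of $(S_{t\pm h_n}-S_t)/(\pm h_n)$ is the same computation, since $S_{t\pm h_n}/h_n\in K$ and $1/h_n>0$. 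So the approaches are essentially identical, yours slightly more concrete.

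One technical point should be made explicit. When you assert $S_{t+h_n}(y_2)-S_{t+h_n}(y_1)\geq 0$ for a fixed pair $y_1<y_2$ in $L^z_t\setminus N$, you are using \emph{pointwise} monotonicity of $S_{t+h_n}$, whereas elements of $K\subset L^2(\R,\bar\rho)$ are only equivalence classes, for which monotonicity of a representative holds only off some $\bar\rho$-null set. This is precisely why the paper invokes Lemma~\ref{L:ONE} to attach a null set $N^k$ to each $W_t^k$ and enlarges the exceptional set to $B\supset\bigcup_k N^k$. Your construction should likewise enlarge $N$ by $\bigcup_n\big(N_{S_{t+h_n}}\cup N_{S_{t-h_n}}\big)$, which is still $\bar\rho$-null because the $h_n$ form a countable family. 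With this addition your proof matches the paper's argument and is complete.
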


\begin{proof}
Due to the contraction property of the projection, we have
\begin{align*}
\| X_{t+h} - X_t \|_{L^2(\R, \bar{\rho})} 
	& \leq \left\|\int_0^t{\Uf_s \,ds}\right\|_{L^2(\R, \bar{\rho})}  \\ 
	& \leq h \|\bar{U}\|_{L^2(\R, \bar{\rho})} 
		+ \left\| \int_t^{t+h}{\left(\int_0^s{f(\tau,X_\tau) \,d\tau}\right)\,ds}\right\|_{L^2(\R, \bar{\rho})}
\end{align*}
and since $f\in L^\infty\big(0,T;L^\infty(\R)\big)$ we deduce that 
\begin{equation}\label{eq:Xt_lipschitz}
\| X_{t+h} - X_t \|_{L^2(\R, \bar{\rho})} 
	\leq |h| \Big( \|\bar{U}\|_{L^2(\R, \bar{\rho})} + C\big(\|f\|_{L^\infty}\big) \Big).
\end{equation}
This proves that $t \mapsto X_t$ is Lipschitz continuous. Its time-derivative exists strongly  and
\[ U_t = \lim_{h \rightarrow 0^+} \dfrac{X_{t+h}-X_t}{h} = -\lim_{h \rightarrow 0^+} \dfrac{X_{t-h}-X_t}{h} \]
for a.e. $t\in (0,T)$.
We deduce that
\[ U_t \in \mathbb{T}_{S_t}K \cap \big(-\mathbb{T}_{S_t}K \big).\]
from the definition \eqref{df:tangent_cone} of tangent cone, now $U_t \in \mathbb{T}_{S_t}K$ implies that there exist two sequences $(W_t^k),\, (\lambda^k)$ with $W_t^k \in K$ and $\lambda^k > 0$, such that
\[ U_t^k = W_t^k -\lambda^k S_t \quad \text{converges strongly to $U_t$ in $L^2(\R, \bar{\rho})$.} \] 
We can then extract a subsequence, still denoted $(U_t^k)$, that converges a.e. towards $U_t$.
For every $k$ we denote by $N^k$ the $\bar{\rho}$-null set associated to $W_t^k$, as introduced in Lemma~\ref{L:ONE}.
There exists a $B \subset \R$ with $\bar{\rho}(B)=0$, such that $\displaystyle \bigcup_k N^k \subset B$ and
\[ 
	U_t^k(y) \longrightarrow U_t(y) 
	\quad\text{for all $y \in \R \setminus B$, as $k\rightarrow\infty$.}
\]
For all $x\in \mathcal{O}_t$ and $y_1,y_2 \in L_t^x \setminus B$ (see Remark~\ref{R:EXPLAN} for notation), we have
\begin{align*}
(y_1-y_2)\big(U_t^k(y_1) -U_t^k(y_2)\big)
& = (y_1-y_2) \big( W_t^k(y_1) -\lambda^k S_t(y_1) -  W_t^k(y_2) + \lambda^k S_t(y_2) \big) \\
& = (y_1-y_2) \big( W_t^k(y_1) - W_t^k(y_2) \big)\\
& \geq 0,
\end{align*}
by monotonicity of $W_t^k$, and thus by passing to the limit $k \rightarrow +\infty$
\[ (y_1-y_2)\big(U_t(y_1) -U_t(y_2)\big) \geq 0. \]
Using now the fact that $U_t \in \big(-\mathbb{T}_{S_t}K \big)$, we obtain in the same way
\[ (y_1-y_2)\big(U_t(y_1) -U_t(y_2)\big) \leq 0. \]
Thus 
\begin{equation}
(y_1-y_2)\big(U_t(y_1) -U_t(y_2)\big) = 0 \quad \text{for all } x\in \mathcal{O},~ y_1,\,y_2 \in L_t^x \setminus B,
\end{equation}
which implies that $U_t$ belongs to $\H_{S_t}$.
\end{proof}

\begin{proposition}\label{df:eul_velocity}\label{P:TWO}
	There exists a velocity $u_t \in L^2(\R, \rho_t)$ such that
	\begin{equation}
	U_t(y) = u_t\big(X_t(y)\big) 
	\quad \text{for $\bar{\rho}$-a.e. $y\in\R$, where}\quad 
	\rho_t := (X_t)_\# \bar{\rho}.
	\end{equation}
\end{proposition}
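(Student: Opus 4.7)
The plan is to exploit the fact that, although $S_t$ alone may have plateaus on the congested intervals $L^z_t$, the full transport $X_t = S_t + \tX$ is essentially injective, so that $u_t$ can be constructed by inverting $X_t$ on its image.

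First I would observe that $\tX$ is strictly increasing on $\spt\bar{\rho}$. The identity $\bar{\rho}(y)/\partial_y\tX(y) = 1$ for $\bar{\rho}$-a.e.\ $y$, recorded after \eqref{E:EUQ}, says that $\partial_y\tX>0$ $\bar{\rho}$-a.e. Moreover, since $\tX$ pushes $\bar{\rho}$ forward to the atomless measure $\trho$, it cannot be constant on any $\bar{\rho}$-non-negligible set (otherwise $\trho$ would have an atom). Since $S_t \in K$ is non-decreasing, it follows that $X_t = S_t + \tX$ is strictly increasing on $\spt\bar{\rho}$, so there exists a Borel $\bar{\rho}$-null set $N$ such that $X_t|_{\R\setminus N}$ is injective.

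Next I would construct $u_t$ by ``inverse composition''. A clean way is via disintegration: let $\Pi := (X_t, U_t)_\# \bar{\rho}$ be the Borel probability measure on $\R \times \R$, whose first marginal is $\rho_t = (X_t)_\#\bar{\rho}$. By the disintegration theorem there exist Borel conditional probabilities $\{\nu_x\}_{x\in\R}$ on $\R$ such that $\Pi = \int \delta_x \otimes \nu_x \, d\rho_t(x)$. The essential injectivity from the previous step forces $\nu_x$ to be a Dirac mass at a single value, which we denote $u_t(x)$, for $\rho_t$-a.e.\ $x$; the resulting map $x \mapsto u_t(x)$ is Borel measurable. By construction, $U_t(y) = u_t(X_t(y))$ for $\bar{\rho}$-a.e.\ $y$. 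The push-forward formula then yields
\[
\int_\R |u_t(x)|^2 \, d\rho_t(x) = \int_\R |u_t(X_t(y))|^2 \, d\bar{\rho}(y) = \int_\R |U_t(y)|^2 \, d\bar{\rho}(y),
\]
which is finite since $U_t \in L^2(\R, \bar{\rho})$ by Proposition~\ref{prop:space_velocity}, giving $u_t \in L^2(\R, \rho_t)$ as required.

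The main obstacle is the essential injectivity of $X_t$: upgrading the a.e.\ positivity $\partial_y X_t > 0$ to injectivity off a $\bar{\rho}$-null set. This rests on the strict monotonicity of $\tX$ on $\spt\bar{\rho}$, which is a genuine use of the fact that $\trho$ is the (atomless) characteristic of $\widetilde{I}$ and not merely a general monotone rearrangement. Interestingly, the property $U_t \in \H_{S_t}$ from Proposition~\ref{prop:space_velocity} is not needed for this construction per se --- it plays its role later, in ensuring that $u_t$ is constant on each congested block and hence admissible for the weak momentum equation.
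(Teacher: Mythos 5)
Your proposal is correct and rests on exactly the same key observation as the paper: since $X_t\in\tK=K+\tX$ and $\tX$ is essentially strictly increasing (a consequence of $\partial_y\tX>0$ $\bar\rho$-a.e., or equivalently of $\trho$ being atomless), the map $X_t$ is injective off a $\bar\rho$-null set, so $u_t$ is obtained by inverse composition and inherits $\|u_t\|_{L^2(\R,\rho_t)}=\|U_t\|_{L^2(\R,\bar\rho)}$ from the push-forward formula. The only variation is presentational: the paper defines $u_t$ by an explicit case split using the null set $N_{X_t}$ provided by Lemmas~\ref{L:ONE} and~\ref{lem:Ot}, whereas you package the inverse via disintegration of $(X_t,U_t)_\#\bar\rho$; both yield the same Borel map.

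One small wording caveat: ``$X_t$ is strictly increasing on $\spt\bar\rho$'' is a slight overstatement — $X_t$ may still have plateaus, only they are all $\bar\rho$-null and there are at most countably many of them (precisely the content of Lemma~\ref{lem:Ot}). Your conclusion that $X_t$ is injective off a $\bar\rho$-null set is what the argument actually delivers, and that suffices. Your remark that Proposition~\ref{prop:space_velocity} ($U_t\in\H_{S_t}$) is not needed for this particular construction, but rather later for the admissibility/exclusion structure, is accurate and a useful observation.
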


\begin{proof}
Since $X_t$ belongs to $\tK$, for all $x\in \R$ there exists at most one $y \in \R \setminus N_{X_t}$ (where $N_{X_t}$ is some null set associated to $X_t$) with $X_t(y) = x$. We can then set
\begin{equation}
u_t(x) := \begin{cases}
~ U_t(y) \quad & \text{if such } y \text{ exists} \\
~ 0      \quad & \text{otherwise.}
\end{cases}
\end{equation}
Then $U_t(y) = u_t(X_t(y))$ for a.e. $y\in \R\setminus N_{X_t}$ and $\|u_t\|_{L^2(\R, \rho_t)} = \|U_t\|_{L^2(\R, \bar{\rho})}$.
\end{proof}

\begin{lemma}
	The space $\H_{S_t}$ defined in \eqref{df:H_S} is characterized as 
	\[\H_{S_t} = \left\{ W \in L^2(\R, \bar{\rho})\colon \text{there exists $w \in L^2(\R, \eta_t)$ with $W=w\circ S_t$} \right\} \]
	where $\eta_t := (S_t)_\# \bar{\rho}$. 
\end{lemma}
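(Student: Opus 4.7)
My plan is to prove the two set inclusions separately. The easy inclusion $\supseteq$ follows from a direct computation: if $W = w \circ S_t$ with $w \in L^2(\R, \eta_t)$, then the push-forward identity
\[
\int_\R |w|^2 \,d\eta_t = \int_\R |w \circ S_t|^2 \,d\bar\rho
\]
shows $W \in L^2(\R, \bar\rho)$, and by construction $W$ is literally constant on every level set of $S_t$ (taking value $w(z)$ on $\{S_t = z\}$). In particular $W$ is a.e. constant on every maximal plateau interval in $\Omega_{S_t}$, giving $W \in \mathcal{H}_{S_t}$.

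For the reverse inclusion $\subseteq$, given $W \in \mathcal{H}_{S_t}$, I would construct $w$ by exploiting the fiber structure of $S_t$ provided by Lemma~\ref{lem:Ot}. Let $N_{S_t}$ be the $\bar\rho$-null set from Lemma~\ref{L:ONE}, and let $\mathcal{O}_t$ be the at most countable set of plateau values described in Remark~\ref{R:EXPLAN}. For each $z \in \mathcal{O}_t$ the fiber $L_t^z$ is an interval (by monotonicity of $S_t$), and since $W$ is $\bar\rho$-a.e. constant on it with some value $c_z$, I set $w(z) := c_z$. On the complementary region $R := \R \setminus \bigcup_{z \in \mathcal{O}_t} L_t^z$ the map $S_t$ is injective by Lemma~\ref{lem:Ot}, so I define $w(z) := W(y)$ when $z = S_t(y)$ for the unique $y \in R \setminus N_{S_t}$, and extend $w$ by zero otherwise. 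By design $W(y) = w(S_t(y))$ holds for $\bar\rho$-a.e. $y$, and the push-forward identity yields $\|w\|_{L^2(\R, \eta_t)} = \|W\|_{L^2(\R, \bar\rho)} < \infty$, so $w \in L^2(\R, \eta_t)$.

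The main technical obstacle is the $\eta_t$-measurability of $w$ on the injective region, where one effectively inverts $S_t$. I would resolve this using the general measure-theoretic fact that a function which is $\bar\rho$-measurable and $\bar\rho$-a.e. constant on the fibers of the measurable map $S_t$ descends to an $\eta_t$-measurable function on the target via the push-forward $\eta_t = (S_t)_\# \bar\rho$; alternatively, one may use that a non-decreasing function admits a Borel generalized inverse on its image, making $w$ Borel on $S_t(R)$. The contribution from $\mathcal{O}_t$ to $w$ is a countable sum of constants times indicators of points and is therefore trivially harmless for measurability.
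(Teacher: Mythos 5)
Your construction of $w$ for the nontrivial inclusion matches the paper's argument exactly: you invoke the fiber structure from Lemma~\ref{lem:Ot}, set $w$ equal to the constant value of $W$ on each plateau fiber $L_t^z$ for $z\in\mathcal{O}_t$, use injectivity of $S_t$ on the complement, and verify $\|w\|_{L^2(\R,\eta_t)}=\|W\|_{L^2(\R,\bar\rho)}$ by push-forward. You additionally spell out the easy inclusion and the $\eta_t$-measurability of $w$, both of which the paper leaves implicit; this is sound but does not change the overall approach.
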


\begin{proof}
Any $W \in \H_{S_t}$ is essentially constant on each maximal interval of $\Omega_{S_t}$ (see Remark~\ref{R:EXPLAN} for notation). 
For all $x \in \R \setminus \mathcal{O}_t$ there exists at most one $y \in L^t_x\setminus N_{S_t}$ such that $S_t(y) = x$. Let us therefore define
\[ w(x) := \begin{cases}
~ W(y) \quad & \text{if such } y \text{ exists} \\
~ 0    \quad & \text{otherwise.}
\end{cases} \]
For all $x\in \mathcal{O}_t$, since $W$ is a.e. constant on $L_t^x$, we can pick a generic $y \in L_t^x$ and define
\[ w(x) := W(y). \]            
By doing so, we have constructed $w$ such that
\[ W(y) = w(S_t(y)) \quad \text{for } \text{-a.e. } x\in \R\setminus N_{S_t}. \]
We have then
\[
	\int_\R{ |W(y)|^2 \bar\rho(x) \,dy} 
		= \int_\R{ |w(S_t(y))|^2 \bar\rho(y) \,dy}
		= \int_{\R}{ |w(x)|^2 \,\eta_t(dx)},
\]
with $\eta_t$ as defined above.
\end{proof}

\begin{proposition}\label{prop:incl_Hx}
	The space $\H_{S_t}$ is included in the $L^2(\R,\bar{\rho})$-closure of
	\[ T_{S_t}K \cap \Big[ \Xf_t - X_t \Big]^\perp. \]
\end{proposition}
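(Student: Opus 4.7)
}
Given $W \in \H_{S_t}$, the preceding lemma lets me write $W = w \circ S_t$ with $w \in L^2(\R,\eta_t)$ where $\eta_t := (S_t)_\# \bar{\rho}$. I would approximate $w$ by a sequence $(w_n)$ of bounded Lipschitz functions converging to it in $L^2(\R,\eta_t)$ (a standard density result, e.g.\ smooth compactly supported approximations). Setting $W_n := w_n \circ S_t$, the isometry $\|W_n - W\|_{L^2(\R,\bar{\rho})} = \|w_n - w\|_{L^2(\R,\eta_t)}$ from the preceding lemma yields $W_n \to W$ in $L^2(\R, \bar{\rho})$. The remaining task is to verify that each $W_n$ lies in $T_{S_t} K \cap [\Xf_t - X_t]^\perp$.

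For membership in $T_{S_t} K$, let $L_n$ denote the Lipschitz constant of $w_n$. For any $\vep \in (0, 1/L_n]$ the map $\phi(s) := s + \vep w_n(s)$ is non-decreasing, and so is its composition $\phi \circ S_t = S_t + \vep W_n$ with the non-decreasing map $S_t$. Hence $S_t + \vep W_n \in K$, which gives $W_n = \vep^{-1}\big((S_t + \vep W_n) - S_t\big) \in T_{S_t} K$. Running the very same argument with $-w_n$ in place of $w_n$ shows that $-W_n \in T_{S_t} K$ as well. For the orthogonality, the variational characterization of the projection $X_t = \mathrm{P}_{\tK}(\Xf_t)$ reads $\langle \Xf_t - X_t, Y - X_t\rangle \leq 0$ for every $Y \in \tK$. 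Applied to $Y := X_t + V/h = Z + \tX$ with any $V = h(Z - S_t) \in T_{S_t} K$, this gives $\langle \Xf_t - X_t, V\rangle \leq 0$ for every $V \in T_{S_t} K$. Testing against both $+W_n$ and $-W_n$ forces the inequality to be an equality, so $\langle \Xf_t - X_t, W_n\rangle = 0$ and $W_n \in T_{S_t} K \cap [\Xf_t - X_t]^\perp$. Passing to the limit places $W$ in the $L^2(\R,\bar{\rho})$-closure.

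The only subtlety is showing that $W_n$ actually belongs to $T_{S_t} K$ rather than merely to its closure $\mathbb{T}_{S_t} K$: this requires the quantitative control $|w_n'| \leq L_n$ in order to ensure that $\phi$ stays non-decreasing for a definite $\vep>0$. The symmetric treatment of $\pm w_n$ is what places $W_n$ in the lineality subspace of the tangent cone, and it is precisely this two-sided inclusion — not any explicit description of the normal cone — that upgrades the projection inequality $\langle \Xf_t - X_t, W_n\rangle \leq 0$ to an equality without needing a correction step.
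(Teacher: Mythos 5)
Your proof is correct and follows essentially the same route as the paper: both arguments reduce to compositions $w_n\circ S_t$ with regular $w_n$ (the paper uses $\varphi\in\D(\R)$, you use Lipschitz approximants), show membership in $T_{S_t}K$ by checking that $\id\pm\vep w_n$ is non-decreasing so that $S_t\pm\vep W_n\in K$, and exploit the two-sided inclusion $\pm W_n\in T_{S_t}K$ to upgrade the projection inequality $\langle\Xf_t-X_t,\cdot\rangle\le 0$ to an equality. A minor but real improvement on your side: you correctly identify the threshold for $\vep$ as $1/\mathrm{Lip}(w_n)$, whereas the paper's condition ``$h>\|\varphi\|_{L^\infty}$'' is an apparent slip for $\|\varphi'\|_{L^\infty}$, since it is the derivative bound that keeps $\id\pm h^{-1}\varphi$ monotone.
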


\begin{proof}
Due to the previous lemma, we are led to show that
\[ \varphi \circ S_t \in  T_{S_t}K \cap \Big[\Xf_t - X_t \Big]^\perp \quad
\text{for all $\varphi \in \D(\R)$,} \]
to get the desired result.
We consider $h>\|\varphi\|_{L^\infty}$ and set
\[ Z_h^\pm = \Big(\id \pm \dfrac{1}{h} \varphi\Big) \circ S_t \in K. \]
We then have
\[ \varphi \circ S_t  = h(Z_h^+ - S_t),  \]
which is by definition an element of  the tangent cone $T_{S_t}K$; see \eqref{df:tangent_cone}.
On the other hand, using the fact that $X_t$ is the projection of $\Xf_t$, we get
\begin{align*}
\pm \langle \, \Xf_t - X_t, \varphi \circ S_t \,\rangle 
& = h \langle \, \Xf_t - X_t, Z_h^\pm - S_t \,\rangle \\
& = h \langle \, \Xf_t - X_t, \tilde{Z}_h^\pm - X_t \,\rangle  \leq 0
\end{align*}
where $\tilde{Z}_h^\pm = Z_h^\pm + \tX \in \tK$, which proves that $\varphi \circ S_t \in \Big[ \Xf_t - X_t \Big]^\perp$.
\end{proof}

\begin{proposition}
The Lagrangian velocity $U_t$ is the orthogonal projection of $\Uf_t$ onto $\H_{S_t}$.	
\end{proposition}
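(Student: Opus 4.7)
The plan is to verify the two defining properties of the orthogonal projection onto the closed subspace $\H_{S_t}$: namely, $U_t \in \H_{S_t}$ and $\Uf_t - U_t \perp \H_{S_t}$. The first is already settled by Proposition~\ref{prop:space_velocity}, so the substance of the argument is the orthogonality. Because $\H_{S_t}$ is by definition the $L^2(\R,\bar\rho)$-closure of $\{\varphi \circ S_t \colon \varphi \in \D(\R)\}$, it suffices by continuity of the inner product to show $\langle \Uf_t - U_t, \varphi \circ S_t\rangle = 0$ for every test $\varphi \in \D(\R)$.

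The key idea is to perturb $X_t$ inside $\tK$ using the competitor $\tZ_h^\pm := (\id \pm h^{-1}\varphi)\circ S_t + \tX$ constructed in the proof of Proposition~\ref{prop:incl_Hx} (with $h > \|\varphi\|_{L^\infty}$ fixed). It lies in $\tK$, satisfies $\tZ_h^\pm - X_t = \pm h^{-1}\varphi\circ S_t$, and Proposition~\ref{prop:incl_Hx} actually yields the exact orthogonality $\langle \Xf_t - X_t, \tZ_h^\pm - X_t\rangle = 0$ at time $t$. The variational characterization of $X_{t+\vep} = \mathrm{P}_{\tK}(\Xf_{t+\vep})$ then provides the inequality $\langle \Xf_{t+\vep} - X_{t+\vep}, \tZ_h^\pm - X_{t+\vep}\rangle \leq 0$ at time $t+\vep$ (with $\vep > 0$).

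I would then subtract these two relations, decompose $\tZ_h^\pm - X_{t+\vep} = (\tZ_h^\pm - X_t) - (X_{t+\vep} - X_t)$, and divide by $\vep > 0$ to reach
$$\pm \frac{1}{h} \left\langle \frac{\Xf_{t+\vep} - \Xf_t}{\vep} - \frac{X_{t+\vep} - X_t}{\vep}, \varphi \circ S_t \right\rangle \leq \frac{1}{\vep}\langle \Xf_{t+\vep} - X_{t+\vep}, X_{t+\vep} - X_t \rangle.$$
As $\vep \to 0^+$ the left side tends to $\pm h^{-1}\langle \Uf_t - U_t, \varphi \circ S_t\rangle$, since $(\Xf_{t+\vep} - \Xf_t)/\vep \to \Uf_t$ and $(X_{t+\vep} - X_t)/\vep \to U_t$ strongly in $L^2(\R,\bar\rho)$. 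For the right side, the Lipschitz bound~\eqref{eq:Xt_lipschitz} allows me to replace $\Xf_{t+\vep} - X_{t+\vep}$ by $\Xf_t - X_t$ modulo an $O(\vep^2)$ error, so the limit equals $\langle \Xf_t - X_t, U_t\rangle$. I would then argue this last inner product vanishes: the projection inequality $\langle \Xf_t - X_t, X_{t\pm\delta} - X_t\rangle \leq 0$, divided by $\pm\delta$ and sent to $\delta\to 0^+$, together with the two-sided differentiability from Proposition~\ref{prop:space_velocity}, yields both $\langle \Xf_t - X_t, U_t\rangle \leq 0$ and $\langle \Xf_t - X_t, U_t\rangle \geq 0$.

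Combining the two sign choices then forces $\langle \Uf_t - U_t, \varphi \circ S_t\rangle = 0$, and density in $\H_{S_t}$ together with $U_t \in \H_{S_t}$ completes the argument. I expect the main obstacle to be the careful handling of the correction term $\vep^{-1}\langle \Xf_{t+\vep} - X_{t+\vep}, X_{t+\vep} - X_t\rangle$: controlling it cleanly requires both the non-expansiveness of the projection (for the Lipschitz bound on $t\mapsto X_t$) and the two-sided differentiability of $X_t$, and the scheme would break down at times $t$ where $\dot{X}_t$ existed only as a one-sided derivative.
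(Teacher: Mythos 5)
Your argument is correct and closes the gap you identify, but it is structured differently from the paper's two-step proof. The paper first establishes $\la \Uf_t - U_t, U_t\ra \ge 0$ by testing the variational inequality at time $t+h_n$ against the competitor $X_t$, relying on weak lower semicontinuity of the norm; it then establishes $\la \Uf_t - U_t, W\ra \le 0$ for $W$ in a dense subset of $\H_{S_t}$ by testing against the competitor $Z_t + \tX$ and tracking the error $\delta_n := U_t^n - U_t$ explicitly. Your route instead feeds the two-sided perturbation $\tZ_h^\pm$ of Proposition~\ref{prop:incl_Hx} into the variational inequality at $t+\vep$ and subtracts the \emph{exact} orthogonality $\la\Xf_t - X_t, \tZ_h^\pm - X_t\ra = 0$ at time $t$, producing a single symmetric inequality that, after dividing by $\vep$ and passing to the limit, yields $\pm h^{-1}\la\Uf_t - U_t, \varphi\circ S_t\ra \le \la\Xf_t - X_t, U_t\ra$ simultaneously. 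Since you show the right-hand side vanishes (using $X_{t\pm\delta}$ as competitors and two-sided differentiability), both sign choices collapse to the orthogonality relation, and density finishes the proof. This is cleaner and avoids the paper's Step~1 entirely; it also takes advantage of the fact that $\H_{S_t}$ is a \emph{linear} subspace, so proving $\la\Uf_t - U_t, W\ra = 0$ for a dense family suffices, whereas the paper's two-inequality structure would be the one needed if $\H_{S_t}$ were merely a closed cone. Two small points worth noting: you describe $\H_{S_t}$ as being \emph{defined} as the closure of $\{\varphi\circ S_t \colon \varphi\in\D(\R)\}$, whereas the paper defines it directly via \eqref{df:H_S} and the density of that class is obtained from the ensuing Lemma; and the monotonicity of $\tZ_h^\pm$ actually requires $h > \|\partial_x\varphi\|_{L^\infty}$ rather than $h > \|\varphi\|_{L^\infty}$ (a slip the paper also makes in the proof of Proposition~\ref{prop:incl_Hx}). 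Also, your identity $\la\Xf_t - X_t, U_t\ra = 0$ is established independently via the projection inequality at time $t$; the paper instead obtains it implicitly from Proposition~\ref{prop:incl_Hx} applied to $U_t \in \H_{S_t}$ --- both are valid, and yours is more self-contained.
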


\begin{proof}
We already know that $U_t \in \H_{S_t}$. Let us therefore show that
\begin{equation}
\langle\, \Uf_t -U_t, U_t \, \rangle = 0 \quad \text{and} \quad \langle\, \Uf_t -U_t, W \, \rangle \leq 0 \quad \text{for all}~ W \in \H_{S_t}.
\end{equation}

\medskip

\textbf{Step~1.} We that that $\langle\, \Uf_t -U_t, U_t \, \rangle \geq 0$.

For any $t,h\in \R$, the quantity
	\[ \dfrac{X_{t+h} - X_t}{h}\]
	is uniformly bounded.
	Therefore there exists a sequence $(h_n)$ such that 
	\[U_t^n := \dfrac{X_{t+h_n} - X_t}{h_n} ~ \weak ~ U_t \quad \text{weakly in $L^2(\R, \bar{\rho})$.}\]
	Using the fact that $X_{t+h_n}$ is the projection of $\Xf_{t+h_n}$ onto $\tK$, we have the inequality
	\[ \la \Xf_{t+h_n} - X_{t+h_n} , X_t - X_{t+h_n} \ra \leq 0, \]
	which can also be rewritten as
	\[ \la \Xf_t - X_t -h_nU_t^n + \int_t^{t+h_n}{\Uf_s \,ds}, -h_n U_t^n \ra \leq 0. \]
Equivalently, by splitting the powers of $h_n$, we have
	\[ -h_n \la \Xf_t - X_t, U_t^n \ra - h_n^2 \la U_t^n - \dfrac{1}{h_n} \int_t^{t+h_n}{\Uf_s \,ds}, U_t^n \ra \leq 0. \]
	Since $h_n U_t^n = X_{t+h_n} - X_t$ and since $X_t$ is the projection on $\tK$ of $\Xf_t$, we deduce that the first term of the left-hand side is non-negative and thus 
	\[ - h_n^2 \la U_t^n - \dfrac{1}{h_n} \int_t^{t+h_n}{\Uf_s \,ds}, U_t^n \ra \leq 0.\]  
	As $h_n \rightarrow 0^+$ we have then
	\[ \dfrac{1}{h_n} \int_t^{t+h_n}{\Uf_s \,ds} \longrightarrow \Uf_t \quad \text{strongly in}~ L^2(\R, \bar{\rho}). \]
From the weak convergence of $U_t^n$ towards $U_t$, it follows that 
\begin{gather*}
	\la \dfrac{1}{h_n} \int_t^{t+h_n}{\Uf_s \,ds}, U_t^n \ra \longrightarrow \la \Uf_t , U_t \ra,
\\
	\|U_t\|_{L^2(\R,\bar{\rho})} \leq \liminf \|U_t^n\|_{L^2(\R,\bar{\rho})}.
\end{gather*}
So we finally obtain the desired inequality 
\[
	\langle\, \Uf_t -U_t, U_t \, \rangle \geq 0.
\]

\medskip

\textbf{Step~2.} We show that $\langle\, \Uf_t -U_t, W \, \rangle \leq 0$ for all $W\in \H_{S_t}$:

Thanks to Propositions~\ref{prop:space_velocity} and~\ref{prop:incl_Hx}, there exists $h>0$ and $Z_t \in K$ such that
	\[ W = h(Z_t - S_t) \quad \text{and} \quad \langle\,\Xf_t-X_t , Z_t -S_t \, \rangle = 0.\]
We must show that
	\[ \langle\, \Uf_t - U_t, Z_t-S_t \, \rangle \leq 0. \]
	We consider as before the approximate velocity $U_t^n$ and introduce $\delta_n := U_t^n-U_t$.
	Since $X_{t+h_n}$ is the projection of $\Xf_{t+h_n}$ onto $\tK$, we have
\begin{align*}
	0 & \geq \langle\, \Xf_{t+h_n} - X_{t+h_n}, Z_t + \tX - X_{t+h_n} \, \rangle \\
	& = \langle\, \Xf_t -X_t + h_n\left(\dfrac{1}{h_n}\int_t^{t+h_n}{\Uf_s \,ds} - U_t\right) - h_n\delta_n, (Z_t-S_t) - h_n U_t -h_n\delta_n \, \rangle.
\end{align*}
Rearranging the terms, we can then get
	\begin{align*}
	& h_n \langle\, \dfrac{1}{h_n}\int_t^{t+h_n}{\Uf_s \,ds} - U_t, Z_t-S_t \, \rangle \\
	& \qquad \leq - \langle\, \Xf_t - X_t, Z_t - S_t \, \rangle  + h_n \la \Xf_t - X_t,U_t \ra \\
	& \qquad\quad + h_n \Big(\la \Xf_t - X_t,\delta_n \ra + \langle\, \delta_n, Z_t - S_t \, \rangle  \Big) \\
	& \qquad\quad + h_n^2 \Big( \langle\,\dfrac{1}{h_n}\int_t^{t+h_n}{\Uf_s \,ds} - U_t, U_t \, \rangle + \langle\, \dfrac{1}{h_n}\int_t^{t+h_n}{\Uf_s \,ds} -U_t, \delta_n \, \rangle \Big) \\
	& \qquad\quad + h_n^2 \Big(\langle\, \delta_n, U_t \, \rangle -\|\delta_n\|^2_{L^2(\R,\bar{\rho})} \Big).         
\end{align*}
By definition of $Z_t$ and Proposition~\ref{prop:incl_Hx}, the first line of the right-hand side vanishes.
	Dividing now by $h_n$ and letting $h_n \rightarrow 0$, the remaining terms tend to $0$ and we get
	\begin{equation}\label{eq:ineg_polarcone}
	\langle\, \Uf_t - U_t, Z_t-S_t \, \rangle \leq 0.
	\end{equation}
It follows that $U_t$ is the orthogonal projection of the free velocity $\Uf_t$ onto $\H_{S_t}$.
\end{proof}

\begin{remark}\label{R:KKPO}
Since $U_t$ is the orthogonal projection of $\Uf_t$ onto $\H_{S_t}$, we have
\begin{equation}
	U_t(y) = \begin{cases}
		~ \Uf_t(y) & \text{if $y \in \R\setminus\Omega_t$}
\\
		~ \displaystyle \frac{1}{\bar{\rho}(I)} \int_I \Uf_t(z) \,\bar{\rho}(dz)
			& \text{if $y \in I$ with $I \in \mathcal{J}(\Omega_t)$,}
	\end{cases}
\label{eq:expr_Ut}
\end{equation}
where $\Omega_t := \bigcup_{z\in\mathcal{O}_t} L^z_t$  and $\mathcal{J}(\Omega_t)$ denotes the set of maximal intervals contained in $\Omega_t$ (there are at most countably many).
We refer the reader to Remark~\ref{R:EXPLAN} for notation.
\end{remark}

\paragraph{Recovering of the mass equation.}
As explained in Section~\ref{sec:mainresult}, the density
\begin{equation}
\rho_t := (X_t)_\# \bar{\rho}, \quad X_t \in \tK,  
\end{equation}
is absolutely continuous with respect to the Lebesgue measure and satisfies the  constraint 
\begin{equation}
0 \leq \rho_t \leq 1 \quad \text{a.e.}
\end{equation}
For all $\xi \in \C^\infty_c([0,T) \times \R)$ we have by a change a variable
\begin{align*}
	- \int_\R{\xi(0,y) \,\bar{\rho}(y) \,dy}
		& = \int_0^T{\dfrac{d}{dt} \left(\int_\R{\xi(t, X_t(y)) \,\bar{\rho}(y) \,dy}\right) \,dt}\\
		& = \int_0^T{\int_\R{\left(\partial_t \xi(t, X_t(y)) + \frac{d}{dt}X_t(y) \partial_x\xi(t, X_t(y))\right)\bar{\rho}(y) \,dy} \,dt}\\
		& = \int_0^T{\int_\R{\Big(\partial_t \xi(t, X_t(y)) + U_t(y) \partial_x\xi(t, X_t(y))\Big) \bar{\rho}(y) \,dy} \,dt},  
\end{align*}
which gives the weak formulation of the mass equation~\eqref{eq:pressurelessMOD_mass} (see Proposition~\ref{P:TWO}):
\begin{equation} 
- \int_\R{\xi(0,x) \,\rho_0(x) \,dx} = \int_0^T{\int_\R{\left(\partial_t \xi(t, x) + u_t(x) \partial_x\xi(t, x)\right) \rho_t(x) \,dx} \,dt}.
\end{equation}

%%%%%%%%%%%%%%%%%%%%%%%%%%%%%%%%%%%%%%%%%%%%%%%%%%%%%%%%%%%%%%%%%

\subsection{Memory effects, definition of the adhesion potential}{\label{sec:Gamma}}

By analogy with the discrete model~\eqref{eq:glueyparticle}, we define the adhesion potential 
\begin{equation}\label{eq:df_gamma}
\Gamma_t(y) := \int_{-\infty}^y{\big(U_t(z) - \Uf_t(z) \big) \,\bar{\rho}(dz)}
	\quad\text{for a.e. $t\in (0,T)$, $y\in\R$.}
\end{equation}

\begin{proposition}\label{P:SUPPORT}
	For a.e. $t\in (0,T)$, $y\in \R$, we have $\Gamma_t(y) \leq 0$ and $\spt \, \Gamma_t \subset \Omega_t$.
\end{proposition}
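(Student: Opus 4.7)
The statement splits into two parts which I would treat separately.

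For the support inclusion $\spt\Gamma_t \subset \Omega_t$, I would use the explicit formula of Remark~\ref{R:KKPO} directly. Pointwise $U_t = \Uf_t$ on $\R \setminus \Omega_t$, so the integrand $(U_t - \Uf_t)\bar\rho$ vanishes there, and on each maximal interval $I \in \mathcal{J}(\Omega_t)$ the constant value $U_t|_I$ coincides with the $\bar\rho$-weighted average of $\Uf_t$ over $I$, so $\int_I (U_t - \Uf_t)\bar\rho = 0$. Combined with $\Gamma_t(-\infty) = 0$ and a left-to-right bookkeeping through the countable, $\bar\rho$-disjoint family $\mathcal{J}(\Omega_t)$, this yields $\Gamma_t \equiv 0$ on $\R \setminus \Omega_t$.

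For the non-positivity it suffices to prove $\Gamma_t(y) \leq 0$ on each interval $I = (a, b) \in \mathcal{J}(\Omega_t)$. Writing $c := U_t|_I$, the desired bound is equivalent to the partial-average property
\[
\int_a^y \Uf_t\,\bar\rho \,\geq\, c\,\bar\rho\bigl((a, y)\bigr)\qquad \text{for every } y \in (a, b),
\]
i.e.~the $\bar\rho$-weighted running average of $\Uf_t$ starting at $a$ never drops below $c$. The natural source of such an inequality is the variational characterization of $U_t$ as the $L^2(\R, \bar\rho)$-projection of $\Uf_t$ onto the tangent cone $\mathbb{T}_{S_t} K$, a standard consequence of time-differentiating the Moreau-type projection $X_t = P_{\tK}(\Xf_t)$. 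Granted this, the cone inequality $\langle \Uf_t - U_t, W\rangle \leq 0$ holds for every admissible direction $W \in \mathbb{T}_{S_t} K$. Testing against the step function $W = \mathbf{1}_{[y, \infty)}$, which lies in $\mathbb{T}_{S_t} K$ because adding a non-negative step to $S_t$ preserves its monotonicity, gives $\int_y^\infty (\Uf_t - U_t)\bar\rho \leq 0$; the same inequality applied to $\pm 1 \in \mathbb{T}_{S_t} K \cap (-\mathbb{T}_{S_t} K)$ forces $\int_\R (\Uf_t - U_t)\bar\rho = 0$, and subtracting yields $\int_{-\infty}^y (\Uf_t - U_t)\bar\rho \geq 0$, which is exactly $-\Gamma_t(y) \geq 0$.

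The principal difficulty is to justify that $U_t$ is the projection of $\Uf_t$ onto the full tangent cone $\mathbb{T}_{S_t} K$, and not merely onto the subspace $\H_{S_t} = \mathbb{T}_{S_t} K \cap (-\mathbb{T}_{S_t} K)$ as established in the preceding proposition. The subspace projection gives $U_t$ as the $\bar\rho$-weighted average on each block, whereas the cone projection would give the isotonic regression of $\Uf_t$ on each block; these coincide if and only if the partial-average property holds, so the argument is essentially circular unless one closes it independently. A complementary route is to apply the convex-minorant/chord-above property of the isotonic regression to the position input $\Xf_t - \tX$, yielding $V_t(y) := \int_{-\infty}^y (\Xf_t - X_t)\bar\rho \geq 0$ with $V_t(a) = V_t(b) = 0$ and strict positivity in the interior of $(a, b)$, and then to promote this position-level inequality to the velocity-level inequality $\Gamma_t \leq 0$ using the identity $\partial_t V_t = -\Gamma_t$ together with the perturbative chord-above bound at time $t + h$ for $\Xf_{t+h} - \tX = (\Xf_t - \tX) + h\Uf_t + O(h^2)$, whose $O(h)$ correction under the assumption that the block $(a, b)$ persists with value $z + hc + O(h^2)$ produces the partial-average inequality. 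Either route hinges on the strict positivity of $V_t$ in the interior of blocks, which is what rules out the failure of the partial-average property.
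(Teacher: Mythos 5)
Your treatment of the support inclusion is fine and matches the paper's: both use the explicit piecewise formula for $U_t$ from Remark~\ref{R:KKPO} to see that the integrand vanishes off $\Omega_t$ and integrates to zero over each block, whence $\Gamma_t\equiv 0$ on $\R\setminus\Omega_t$.

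For the non-positivity, your Route~1 has a genuine gap that you correctly diagnose but do not close. Testing $\langle \Uf_t-U_t, W\rangle\le 0$ with $W=\mathbf{1}_{[y,\infty)}$ requires that $U_t$ be the metric projection of $\Uf_t$ onto the full tangent cone $\mathbb{T}_{S_t}K$. That is \emph{not} what time-differentiating $X_t=\mathrm{P}_{\tK}(\Xf_t)$ yields: the standard result (and what the paper proves in Step~2 via \eqref{eq:ineg_polarcone}) gives the polar-cone inequality only on the \emph{critical} cone $\mathbb{T}_{S_t}K\cap[\Xf_t-X_t]^\perp$. Your candidate direction does not lie there in general: $\langle \Xf_t-X_t,\mathbf{1}_{[y,\infty)}\rangle = -\int_{-\infty}^{y}(\Xf_t-X_t)\,\bar\rho$, which is $\le 0$ by the chord-above property and strictly negative precisely in the interior of the block — exactly where you need the inequality. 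So the direction $\mathbf{1}_{[y,\infty)}$ points into the part of the tangent cone where the unilateral constraint is active and the polar inequality need not hold. Your Route~2 (lifting the position-level chord inequality $V_t\ge 0$ to the velocity level via $\partial_t V_t=-\Gamma_t$ and a persistence-of-blocks expansion at $t+h$) is the right intuition, but as written it is only a sketch; you do not justify the persistence of the block boundary to order $O(h^2)$, nor handle the points where blocks are created or split, and you acknowledge the argument would need to be closed independently.

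The paper closes the argument differently, staying entirely within the constrained class: it first shows $\langle \partial_y\Gamma_t, S_t\rangle=0$ (integration by parts, using that $\Gamma_t$ vanishes on $\spt\partial_y S_t$ by the support statement already proved), so that \eqref{eq:ineg_polarcone} becomes $\langle -\partial_y\Gamma_t, Z\rangle\le 0$ for every monotone $Z$ with $\langle \Xf_t-X_t, Z-S_t\rangle=0$; taking $Z\in\mathcal C^1$ and integrating by parts gives $\int_\R \Gamma_t\,\partial_y Z\le 0$, and $\Gamma_t\le 0$ follows from the arbitrariness of such $Z$. No appeal to the full tangent cone, nor to a time-expansion of $V_t$, is made. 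If you want to salvage your approach, you would need to either prove the full-cone polar inequality (which is a strictly stronger statement than what is in the paper and would require its own argument), or flesh out the $V_t$-expansion route, including the degenerate cases; otherwise adopt the paper's reduction through the critical cone and the identity $\langle\partial_y\Gamma_t,S_t\rangle=0$.
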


\begin{proof}
We use the notation introduced in Remark~\ref{R:KKPO}. For any $y\in\R$ let
\[
	\mathcal{J}_y := \big\{ I \in \mathcal{J}(\Omega_t) \colon
		\sup I \leq y \big\}.
\]
Fix some $y \in \R\setminus\Omega_t$. 
We decompose the integral defining $\Gamma_t(y)$ and use~\eqref{eq:expr_Ut} to write
\begin{align*} 
	\Gamma_t(y) & = \int_{(-\infty,y]\setminus\Omega_t}{\Big(U_t(z) - \Uf_t(z)\Big)  \,\bar{\rho}(dz)}
		+ \sum_{I \in \mathcal{J}_y} \int_I{\Big(U_t(z) - \Uf_t(z)\Big) \,\bar{\rho}(dz)} \\
& = 0 + \sum_{I \in \mathcal{J}_y} \left[\int_I{\left(\dfrac{1}{\bar{\rho}(I)} \int_I{\Uf_t(\tilde{z})\bar{\rho}(d\tilde{z})}\right) \,\bar{\rho}(dz)} - \int_I{\Uf_t(z) \,\bar{\rho}(dz)} \right] = 0.
\end{align*}
An integration by parts (with $\Gamma_t$ continuous and $\partial_y S_t$ a measure) now yields
\begin{equation}
\langle\, \partial_y \Gamma_t , S_t \, \rangle = 0
\end{equation}
(see also \cite{natile2009} Lemma~3.10). Using \eqref{eq:df_gamma} and
\eqref{eq:ineg_polarcone}, we obtain
\begin{align*} 
\langle\, -\partial_y \Gamma_t , Z -S_t  \,\rangle
& = \langle\, \Uf_t - U_t , Z -S_t  \,\rangle \leq 0 \\
& \qquad \text{for all}~Z~\text{in}~ K~ \text{with}~ \langle \Xf_t - X_t, Z-S_t \rangle = 0 . 
\end{align*} 
Suppose that in addition $Z\in \C^1(\R)$. Then
\[ 
	0 \geq \langle\, -\partial_y \Gamma_t , Z - S_t  \,\rangle 
		= \la -\partial_y \Gamma_t , Z \ra  
		= \int_\R{\Gamma_t(y) \partial_y Z(y) \,\bar{\rho}(dy)}.
\]
From the arbitrariness of the test function $Z$, we obtain that
$\Gamma_t \leq 0$.
\end{proof}

As in Proposition~\ref{df:eul_velocity}, we can define for
a.e. $t\in (0,T)$ an Eulerian adhesion potential
\begin{equation}
\gamma_t(x) := \Gamma_t(y) \quad \text{with} \quad x=X_t(y).
\label{E:TRGAMMA}
\end{equation}

\paragraph{Exclusion relation.}
If $\rho_t$ is a Borel family of probability measures satisfying the continuity equation in the distributional sense for a Borel velocity field $u_t$ such that
\[ \int_0^T\int_{\R}{|u_t| \,\rho_t(dx) \,dt} < + \infty, \]
then there exists a narrowly continuous  curve $t\in [0,T] \mapsto \widetilde{\rho}_t \in \mathcal{P}(\R)$ such that
\[ \rho_t = \widetilde{\rho}_t \quad \text{for a.e.}~t\in (0,T);\]
see \cites{ambrosio2008, santambrogio2015}, for instance.
Recall now that the transport $X_t$ satisfies a Lipschitz property, which has allowed us to define the velocity $U_t$. 
We have that
\begin{align}
	X \quad & \text{belongs to} \quad W^{1,\infty}(0,T;L^2(\R,\bar{\rho})) \\
	U, \Uf \quad & \text{belongs to} \quad L^\infty(0,T;L^2(\R,\bar{\rho})), 
\end{align}
which implies that $\partial_y \Gamma$ is in $L^\infty(0,T;L^1(\R))$ since $\bar{\rho}$ is absolutely continuous with respect to the Lebesgue measure. The adhesion potential $\Gamma_t$ is then bounded and continuous in space, hence $\gamma_t$ is measurable and bounded. It can be paired with $\rho_t$, which is absolutely continuous with respect to the Lebesgue measure (with the pointwise bound $0\leq \rho_t\leq 1$). We have
\begin{align*}
	\int_\R \gamma_t(x) \rho_t(x) \,dx 
		& = \int_\R \gamma_t(x) \,\big( (X_t)_\#\bar{\rho} \big)(dx)
\\
		& = \int_\R \gamma_t(X_t(y)) \bar{\rho}(y) \,dy
		= \int_\R \Gamma_t(y) \bar{\rho}(y) \,dy.
\end{align*}
On the other hand, recall that $\bar{\rho}(y)/\partial_y \tX(y) = 1$ for $\bar{\rho}$-a.e. $y\in\R$, 
because of \eqref{E:EUQ}. Since $\Gamma_t$ vanishes outside $\Omega_t$,
as shown in Proposition~\ref{P:SUPPORT}, we can write
\begin{align*}
	\int_\R \Gamma_t(y) \bar{\rho}(y) \,dy
		& = \int_\R \Gamma_t(y) \bar{\rho}(y) \,dy
		= \int_{\Omega_t} \Gamma_t(y) \partial_y \tX(y) \,dy
\\
		& = \int_{\Omega_t} \Gamma_t(y) \partial_y X_t(y) \,dy
		= \int_\R \Gamma_t(y) \partial_y X_t(y) \,dy
		= \int_\R \gamma_t(x) \,dx.
\end{align*}
For the third equality, we have used that 
$\partial_yX_t(y) = \partial_y\tX(y)$ for a.e. $y\in\Omega_t$.
Therefore
\begin{equation*}
	\int_\R \gamma_t(x)\rho_t(x) \,dx = \int_\R \gamma_t(x) \,dx
\end{equation*}
for a.e. $t\in(0,T)$. 
Since in addition $(1-\rho_t(x))\gamma_t(x) \leq 0$ for a.e. $x\in\R$, we get
\begin{equation}\label{eq:excl_constraint_0}
(1-\rho) \gamma = 0 \quad \text{a.e. on } (0,T) \times \R.
\end{equation}

\subsection{Recovering of the momentum equation}{\label{sec:weak_form}}

Similarly to the continuity equation, we want to recover the Eulerian momentum equation~\eqref{eq:pressurelessMOD_mom} by passing to the Lagrangian coordinates.
For all $\varphi \in \mathcal{C}^\infty_c\big([0, T) \times \R\big)$ we have
\begin{align}
	- \int_\R \varphi(0,x) \, \bar{\rho}(x) \bar{u}(x) \,dx 
	& = -\int_\R \varphi(0,X_0(y)) \,\bar{\rho}(y)\bar{U}(y) \, dy 
\nonumber\\
	& = \int_0^T \int_\R \frac{d}{dt}\varphi(t,X_t(y)) 
		\, \bar{\rho}(y)\bar{U}(y) \,dy \,dt.
\label{E:START}
\end{align}

\noindent Recall our choice of initial data \eqref{eq:initiald} (in particular, we have $X_0 = \id$), 
from which the first equality follows.
We can expand the time derivative of the test function to obtain
\[
	\frac{d}{dt}\varphi(t,X_t(y))
		= \partial_t\varphi(t,X_t(y)) + U_t(y) \partial_x \varphi(t,X_t(y)),
\]
where we used that $\frac{d}{dt} X_t = U_t$, by definition. 
Using \eqref{eq:df_gamma} and \eqref{eq:freevelocity}, we find
\[
	\partial_y \Gamma_t(y) 
		=  \bar{\rho}(y)\bigg( U_t(y)-\bar{U}(y)-\int_0^t f(s,X_s(y)) \,ds \bigg)
\]
for a.e. $y\in\R$. Rearranging terms, we obtain from this the identity
\[
	\bar{\rho}(y) \bar{U}(y)
		=  \bar{\rho}(y)\bigg( U_t(y) - \int_0^t f(s,X_s(y)) \bigg) - \partial_y \Gamma_t(y),
\]
which we insert into \eqref{E:START}. Let us discuss the different terms. First, we have
\begin{align*}
	& \int_0^T \int_\R \frac{d}{dt}\varphi(t,X_t(y)) \,\bar{\rho}(y) U_t(y) \,dy \,dt
\\
	& \qquad
		= \int_0^T \int_\R \Big( \partial_t\varphi(t,X_t(y)) + U_t(y) \partial_x \varphi(t,X_t(y))
			\Big) \,\bar{\rho}(y) U_t(y) \,dy \,dt
\\
	& \qquad
		= \int_0^T \int_\R \Big( \partial_t\varphi(t,x) + u_t(x) \partial_x \varphi(t,x)
			\Big) \,\rho_t(x)u_t(x) \,dx \,dt,
\end{align*}
where we used Proposition~\ref{P:TWO}. Second, by integrating by parts in time, we get
\begin{align*}
	& \int_0^T \int_\R \frac{d}{dt}\varphi(t,X_t(y)) \,\bar{\rho}(y) 
		\bigg( -\int_0^t f(s,X_s(y)) \,ds \bigg) \,dy \,dt
\\
	& \qquad
		= \int_0^T \int_\R \varphi(t,X_t(y)) \,\bar{\rho}(y) f(t,X_t(y)) \,dy \,dt
\\
	& \qquad
		= \int_0^T \int_\R \varphi(t,x) \,\rho_t(x) f(t,x) \,dx \,dt.
\end{align*}
Finally, from definition \eqref{E:TRGAMMA} and the chain rule, we obtain the identity
\[
	\partial_y \Gamma_t(y) 
		= \partial_y \Big( \gamma_t(X_t(y)) \Big)
		= \partial_x \gamma_t(X_t(y)) \; \partial_y X_t(y)
\]
for a.e. $y\in\R$. It then follows that
\begin{align*}
	& -\int_0^T \int_\R \frac{d}{dt}\varphi(t,X_t(y)) \,\partial_y \Gamma_t(y) \,dy \,dt
\\
	& \qquad
		= -\int_0^T \int_\R \Big( \partial_t\varphi(t,X_t(y)) + U_t(y) \partial_x \varphi(t,X_t(y))
			\Big) \,\partial_x \gamma_t(X_t(y)) \; \partial_y X_t(y) \,dy \,dt
\\
	& \qquad
		= -\int_0^T \int_\R \Big( \partial_t\varphi(t,x) + u_t(x) \partial_x \varphi(t,x)
			\Big) \,\partial_x \gamma_t(x) \,dx \,dt.
\end{align*}
Combining all terms, we find the momentum equation \eqref{eq:pressurelessMOD_mom}, 
which concludes the proof.

\begin{remark}
We have uniqueness for \eqref{eq:pressurelessMOD} in the class of weak solutions of the form
\[ \rho_t = X_t \# \bar{\rho}, \quad X_t = \P_{\tK}(\Xf_t).\]
Indeed, by the contraction property of the metric projection, for solutions $X^1_t, X^2_t$ we have
\begin{align*}
\|X^1_t - X^2_t\|_{L^2(\R,\bar{\rho})} 
& = \left\|\P_{\tK}\left(\id + \int_0^t{U^\text{free,1}_s \,ds} \right)-\P_{\tK}\left(\id + \int_0^t{U^\text{free,2}_s \,ds}\right)\right\|_{L^2(\R,\bar{\rho})}  \\
& \leq \left\|\int_0^t{U^\text{free,1}_s \,ds}-\int_0^t{U^\text{free,2}_s \,ds}\right\|_{L^2(\R,\bar{\rho})} \\
& \leq t \|\bar{U}^1 - \bar{U}^2\|_{L^2(\R,\bar{\rho})} + k\int_0^t\int_0^s{\|X_\tau^1 - X_\tau^2\|_{L^2(\R,\bar{\rho})} \,d\tau \,ds}, 
\end{align*}
with $k$ the Lipschitz constant of $f$.
From Gronwall's lemma (see \cite{bainov2013} Theorem 11.4), we get
\begin{equation}
\|X^1_t - X^2_t\|_{L^2(\R,\bar{\rho})}  \leq t \|\bar{U}^1 - \bar{U}^2\|_{L^2(\R,\bar{\rho})} 
	+ k\int_0^t{\frac{s^2}{2} \exp\Big( k(t-s) \Big) \|\bar{U}^1 - \bar{U}^2\|_{L^2(\R,\bar{\rho})} \,ds},
\end{equation}
which proves that $X_t^1=X_t^2$ for all $t$ if $\bar{U}^1= \bar{U}^2$, 
and thus the uniqueness of the transport $X_t$. 
The velocity $U_t$ is then uniquely defined as well since it is the orthogonal projection of 
\[
	\Uf_t = \bar{U} + \int_0^t f(s,X_s) \,ds
\]
onto $\mathcal{H}_{S_t}$ for a.e. $t$.
Finally, the adhesion potential $\Gamma_t$ is unique by definition~\eqref{eq:df_gamma}.
\end{remark}

\begin{remark}
The initial data is actually attained in a stronger sense than just distributionally
(cf. Definition~\ref{D:SOLNN}) as $t\rightarrow 0$. 
Let us define the $L^2$-Wasserstein distance
\[ 
	W_2(\rho^1, \rho^2)^2 := \min \left\{ 
		\int_{\R \times \R}{|x_1-x_2|^2 \,\omega(dx_1,dx_2)} \colon
			\omega \in \mathcal{P}(\R\times \R), \ \pi^i_\# \omega = \rho_i \right\} 
\]
where $\pi^i(x_1,x_2) = x_i$ is the projection on the $i$th coordinate.
In the one-dimensional setting, there exists a unique optimal coupling $\omega$:
Denoting by $X_i$ the monotone transport in $L^2(\R,\bar{\rho})$ 
such that $\rho_i = X_{i\#}\bar{\rho}$, 
where $\bar{\rho}$ is some reference measure 
that is absolutely continuous with respect to the Lebesgue measure, 
we can write 
\[ 
	W_2(\rho^1,\rho^2)^2 
		= \int_{\R} \left| X_1(y) -X_2(y) \right|^2 \,\bar{\rho}(dy);
\] 
see \cite{rachev1998}).
If we introduce additionally the semi-distance
\[ 
	U_2\big((\rho^1,\rho^1 u^1), (\rho^2,\rho^2 u^2)\big)^2 
		:= \int_{\R}{\big|u^1(X_1(y)) - u^2(X_2(y))\big|^2 \,\bar{\rho}(dy) }, 
\]
then the function
\[
	D_2\big((\rho^1,\rho^1 u^1), (\rho^2,\rho^2 u^2)\big) 
		:= W_2(\rho^1, \rho^2) + U_2\big((\rho^1,\rho^1 u^1), (\rho^2,\rho^2 u^2)\big)
\]
is a distance on the space 
\[
	\mathcal{V}_2(\R) := \Big\{ (\rho, \rho u) \in \mathcal{P}_2(\R) \times \mathcal{M}(\R)
		\colon u \in L^2(\R, \rho)\Big\}.
\]
One can show that $(\mathcal{V}_2(\R), D_2)$ is a metric space, not necessarily complete.
Convergence with respect to the distance is stronger than weak convergence of measures.
We refer the reader to 
see~\cite{natile2009} Proposition 2.1 and~\cite{ambrosio2008} Definition 5.4.3
for further information.

The density $\rho_t$ converges to $\rho_0$ for the Wasserstein distance since
\begin{align*}
	W_2(\rho_t,\rho_0)^2
		& = \int_{\R}{|X_t(y) -X_0(y)\big|^2 \,\bar{\rho}(dy)} \\
		& = \| X_t-\id \|_{L^2(\R,\bar{\rho})}^2 \\
		& \leq 2t \|\bar{U}\|_{L^2(\R,\bar{\rho})}^2
			+ 2\left\|\int_0^t{ \int_0^s{f(\tau,X_\tau) \,d\tau} \,ds}\right\|_{L^2(\R,\bar{\rho})}^2
	\longrightarrow 0
\end{align*}
as $t\rightarrow 0$. 
Moreover, we can adapt the proof of \cite{brenier2013} Theorem~3.5 to show that 
\begin{equation}
	U_t \longrightarrow \bar{U}
	\quad\text{strongly in $L^2(\R,\bar{\rho})$}
\label{E:STRONG}
\end{equation}
\emph{provided} that the initial velocity $\bar{U}$ belongs to the tangent
cone $\mathbb{T}_{S_0} K$ with $S_0 := X_0-\tX \in K$, or even to
$\mathcal{H}_{S_0}$; see \eqref{df:tangent_cone} and \eqref{df:H_S}.
As follows from the proof of Proposition~\ref{prop:space_velocity},
this requires that the initial velocity is non-decreasing (resp.
constant) on the congested zones of the initial density. If this
condition is not satisfied, then the initial velocity $\bar{U}$ may not be
attained, not even in distributional sense; see Remark~\ref{E:INVELO}.
Convergence \eqref{E:STRONG} translates into
\[
	D_2\big((\rho_t,\rho_t u_t), (\bar\rho,\bar\rho \bar{u})\big) \longrightarrow 0,
\]
from which it follows that $(\rho_t, \rho_t u_t) \longrightarrow
(\bar\rho, \bar\rho \bar{u})$ in $\mathcal{V}_2(\R)$ as $t\rightarrow 0$.
\end{remark}

\section{Numerical simulation}\label{sec:numerics}

\begin{figure}[t]
	\begin{center} \includegraphics[width = 10cm]{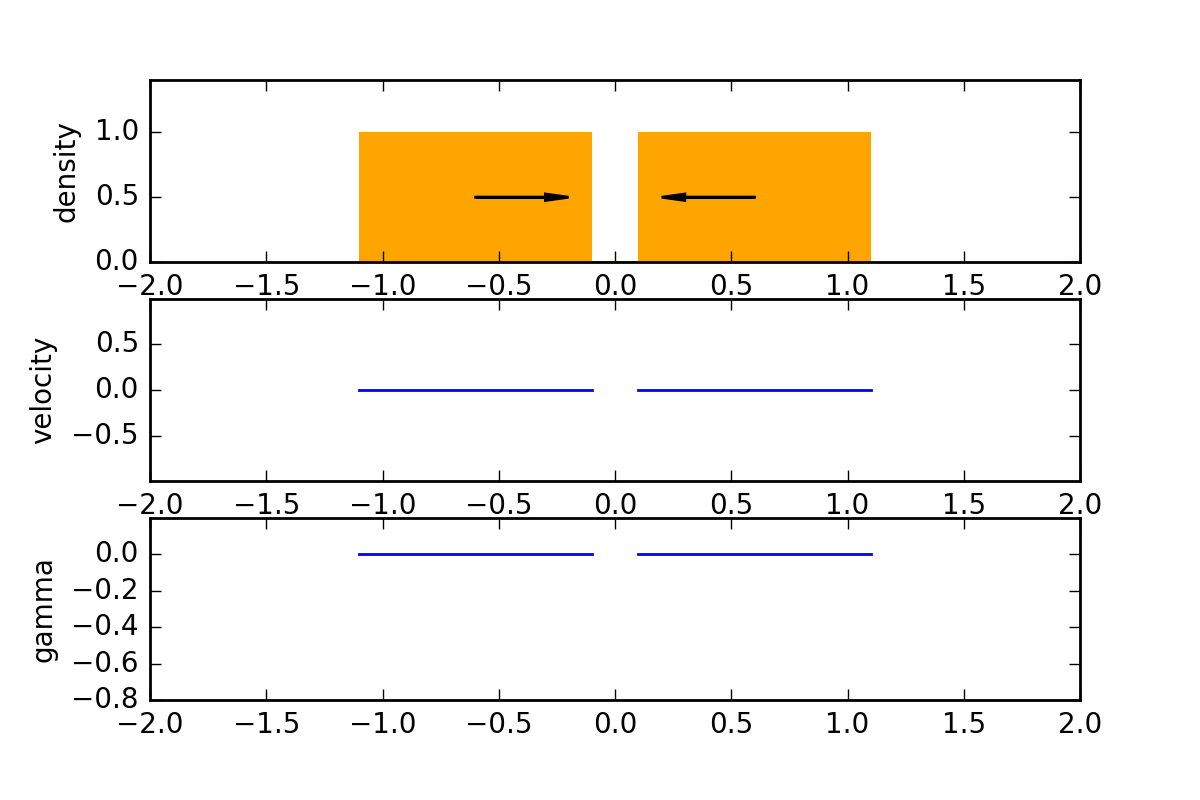}	
	\end{center}
	\caption{{\label{fig:init_data}} Initial data. The arrows represent the external force $f$ at time $0$.}
\end{figure}

To illustrate the memory effects of one-dimensional flows in \eqref{eq:pressurelessMOD}, 
we consider initial data formed by two congested blocks $\mathbf{1}_{[a_1,b_1]}$ 
and $\mathbf{1}_{[a_2,b_2]}$ with $b_1 < a_2$ at time $t=0$; see Figure~\ref{fig:init_data}.
Initially, both the velocity and adhesion potential are equal to zero.
We apply an external force $f$, such that the system first compresses and then decompresses in a second phase:
\begin{equation}
f(t,x) = 
\begin{cases}
~ \alpha \quad & \text{if} \quad x < 0 \\
~ -\alpha\quad & \text{if} \quad x \geq 0
\end{cases}
\qquad \text{for}\quad t \leq t^*
\end{equation}
\begin{equation}
f(t,x) = 
\begin{cases}
~ -\alpha \quad & \text{if} \quad x < 0 \\
~ \alpha \quad & \text{if} \quad x \geq 0
\end{cases}
\qquad \text{for}\quad t > t^*.
\end{equation}
In our simulation, we choose $\alpha = 0.5$ and $t^* = 1$.
We denote by $\Xe$, $\Ue$ the exact solution of the dynamics.
The process can be decomposed into four different phases:

\medskip

\textbf{Phase 1.} The blocks move freely until time $t_1= \sqrt{(a_2 -b_1)/\alpha}$. 
Then the blocks collide. 
We choose initial positions in such a way that the collision happens at $x=0$:
\[
\left\{
\begin{aligned}
	~\Xe_t(y) 
		& = X_0(y) + \dfrac{\alpha t^2}{2} \Big(\mathbf{1}_{\{X_0(y) < 0 \}}(y) 
			-\mathbf{1}_{\{X_0(y) > 0 \}}(y) \Big)
\\
	~\Ue_t(y) 
		& = \alpha t \Big(\mathbf{1}_{\{X_0(y) < 0 \}}(y) 
			-  \mathbf{1}_{\{X_0(y) > 0 \}}(y) \Big)
\\
	~\Ge_t(y) 
		& = 0
\end{aligned}
\right.
\qquad\text{for $t \leq t_1$.}
\]

\begin{figure}[t]
	\begin{center}
		\includegraphics[width = 7.5cm]{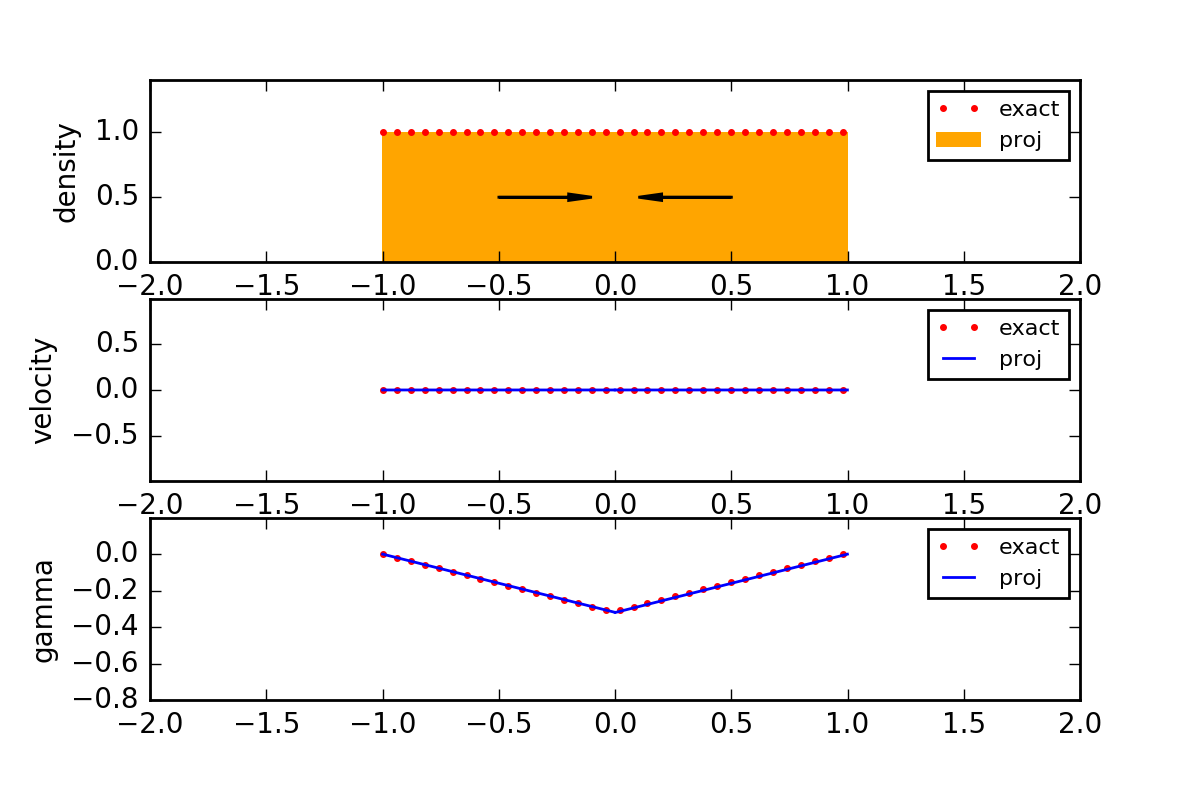} 
		\includegraphics[width = 7.5cm]{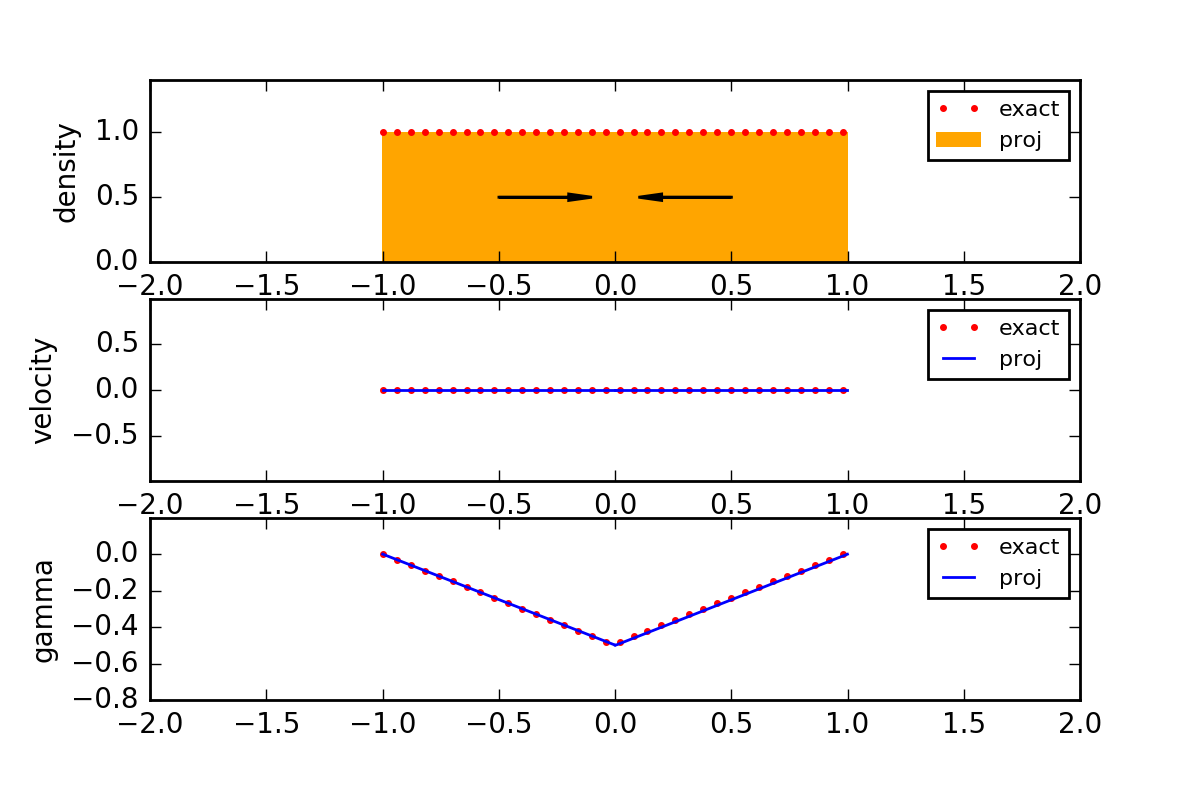}
	\end{center}
	\caption{{\label{fig:congestion}} Solution at times $t =t_1 = 0.64$ (left) 
	and $t=t^*=1$ (right).}
\end{figure}

\textbf{Phase 2.} 
From time $t_1$ on, the blocks are stuck together, 
but the force keeps compressing until time $t=t^*$.
The velocity is $0$ while the adhesion potential is activated:
\[
\left\{
\begin{aligned}
	~\Xe_t(y) 
		& = \Xe_{t_1}(y) 
\\
	~\Ue_t(y) 
		& = 0
\\
	~\Ge_t(y) 
		& = \alpha\big(\Xe_t(y)-(b_2-a_2)\big)t \mathbf{1}_{\{ X_0(y) > 0 \}}(y)
\\
		& -\alpha\big(\Xe_t(y)+(b_1-a_1)\big)t \mathbf{1}_{\{ X_0(y) < 0 \}}(y) 
\end{aligned}
\right.
\qquad\text{for $t_1 < t \leq t^*$.}
\]

\begin{figure}[t]
	\begin{center}
		\includegraphics[width = 7.5cm]{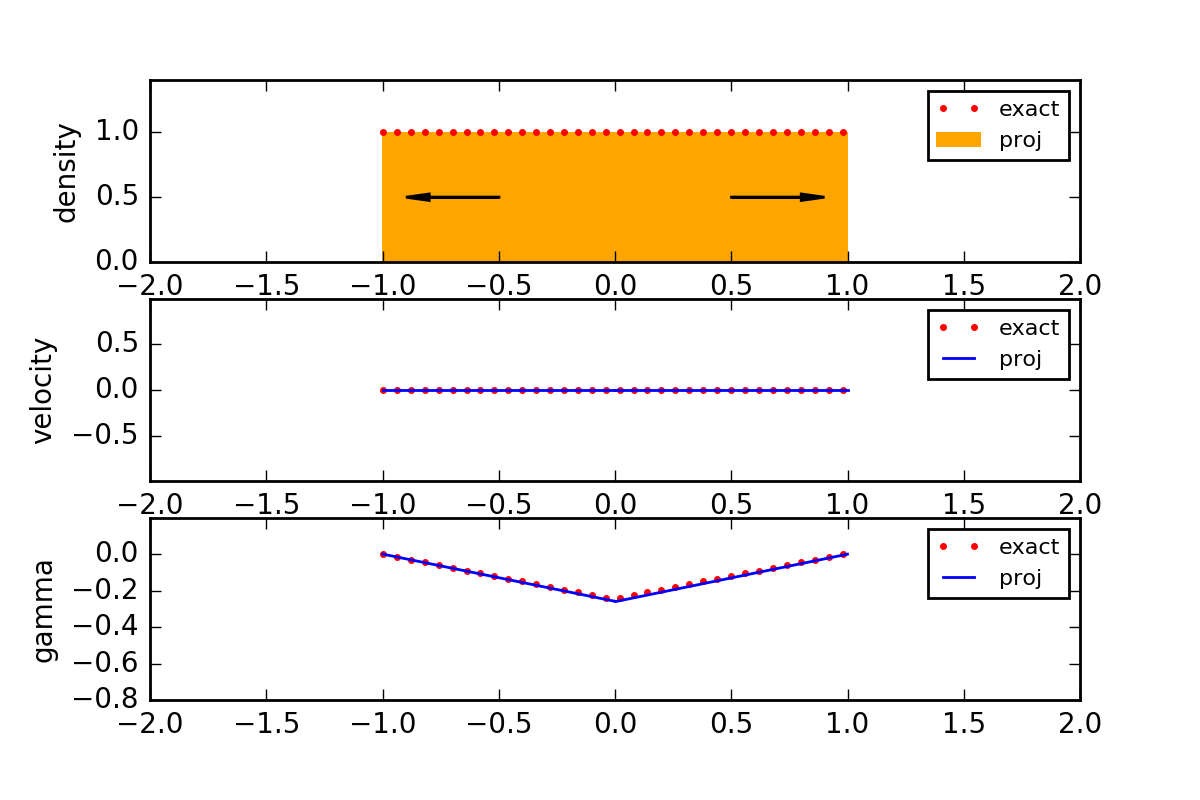} 
		\includegraphics[width = 7.5cm]{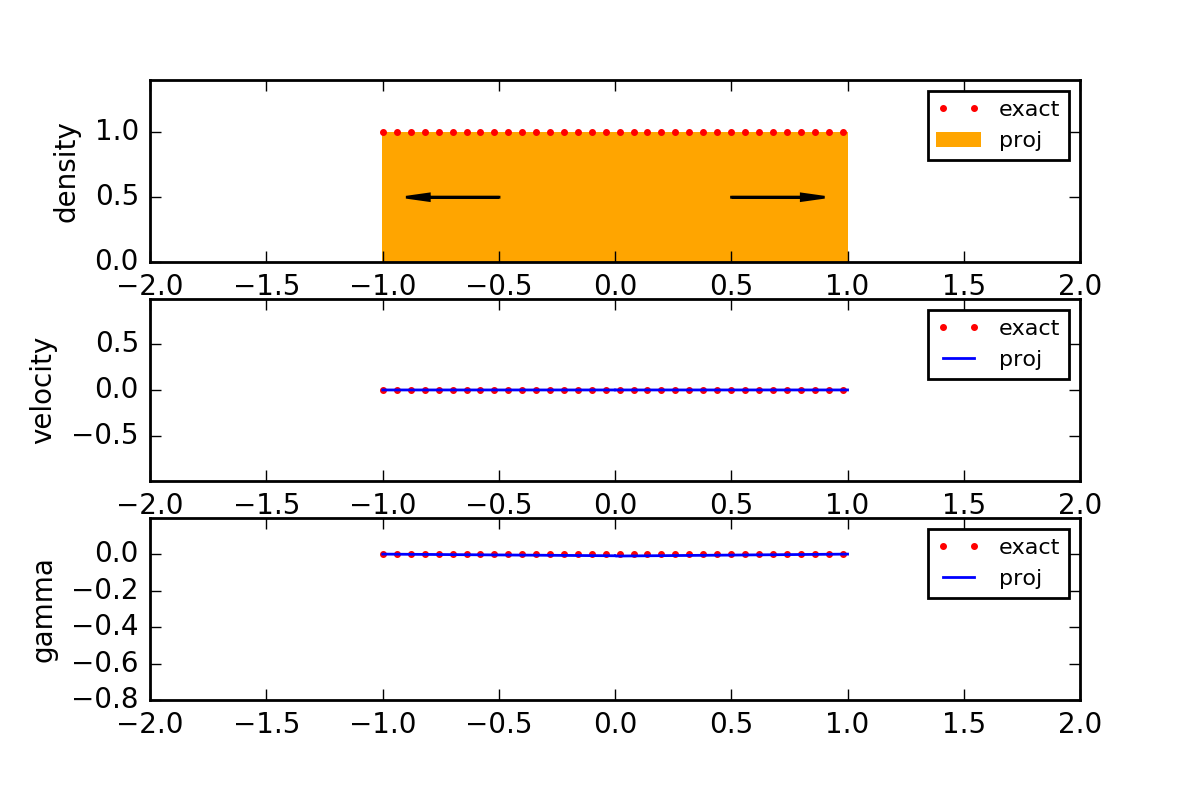}
	\end{center}
	\caption{{\label{fig:decongestion1}} Solution at times $t = 1.5$ (left) 
	and $t=t_2=2$ (right).}
\end{figure}

\textbf{Phase 3.} 
When we reverse the force at time $t=t^*$, the blocks remain stuck to each other 
until the adhesion potential comes back to $0$. The velocity is zero:
\[
\left\{
\begin{aligned}
	~\Xe_t(y) 
		& = \Xe_{t_1}(y) 
\\
	~\Ue_t(y) 
		& = 0 
\\
	~\Ge_t(y)
	& = \Big[-\alpha\big(\Xe_t(y)-(b_1-a_1)\big) t^* 
\\
	& \qquad + \alpha\big(\Xe_t(y)-(b_1-a_1)\big) (t-t^*) \Big]\mathbf{1}_{\{ X_0(y) < 0 \}}(y) 
\\
 	& + \Big[\alpha\big(\Xe_t(y)-(b_2-a_2)\big) t^* 
\\
	& \qquad - \alpha\big(\Xe_t(y)-(b_2-a_2)\big) (t-t^*) \Big]\mathbf{1}_{\{ X_0(y) > 0 \}}(y)
\end{aligned}
\right.
\qquad\text{for $t^* < t \leq t_2$.}
\]

\begin{figure}[t]
	\begin{center}
		\includegraphics[width = 7.5cm]{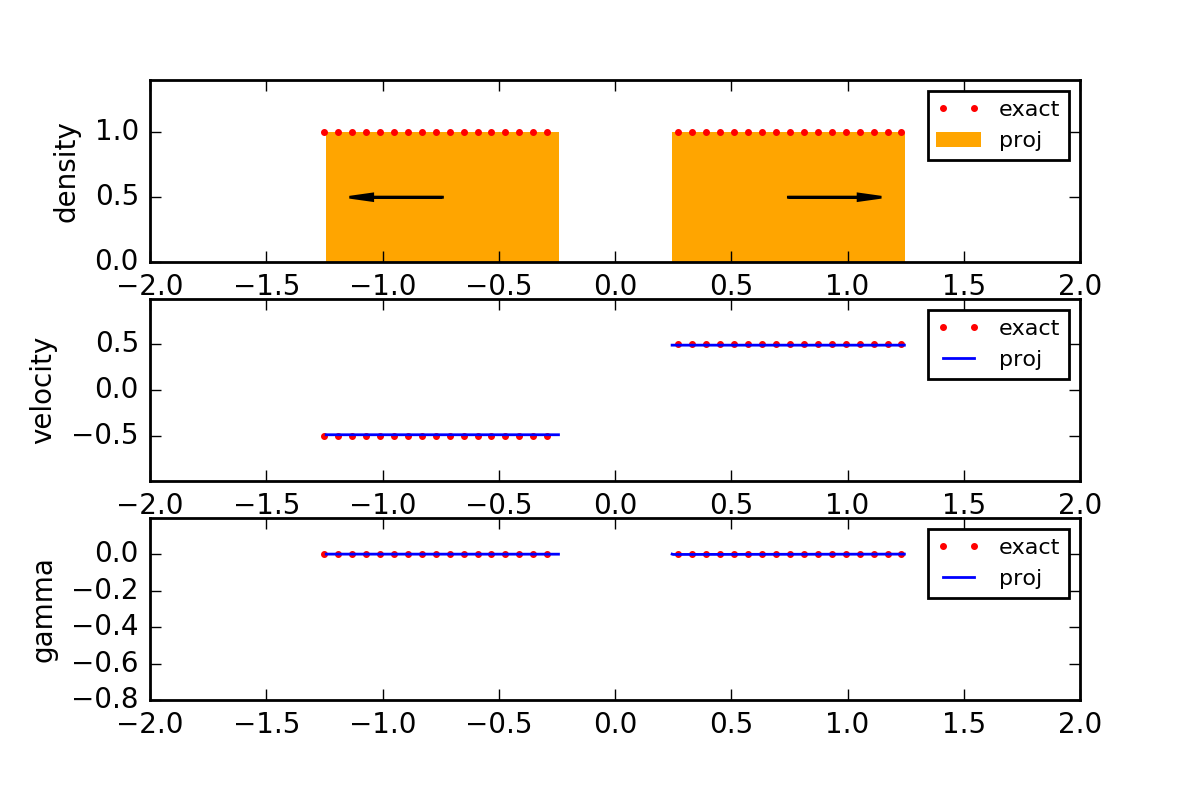}
	\end{center}
	\caption{{\label{fig:decongestion2}} Solution at time $t=3$.}
\end{figure}

\textbf{Phase 4.} Finally, at time $t_2 = 2t^*$ the blocks separate from each other:
\[
\left\{
\begin{aligned}
	~\Xe_t(y) 
		& = \Xe_{t_1}(y) + \dfrac{\alpha (t-t_2)^2}{2} \Big(-\mathbf{1}_{\{X_0(y) < 0 \}}(y) 
			+ \mathbf{1}_{\{X_0(y) > 0 \}}(y) \Big)
\\
	~\Ue_t(y) 
		& = \alpha (t-t_2) \Big(-\mathbf{1}_{\{X_0(y) < 0 \}}(y) 
			+ \mathbf{1}_{\{X_0(y) > 0 \}}(y) \Big)
\\
	~\Ge_t(y) 
		& = 0
\end{aligned}
\right.
\qquad\text{for $t \geq t_2$.}
\]

\medskip
Our numerical code follows the Lagrangian approach developed in the previous sections. 
To determine the transport $X_{t_h}$ at time $t_h$, we minimize the objective function
\[ 
	\phi_{t_h}(X) = \left\| X_0 + \int_0^{t_h}{\Uf_s \,ds} - X \right\|_{L^2(\R,\rho_0}^2 
\] 
under the constraint $X \in \tK$.
This step is performed by use of the Python software CVXOPT for convex optimization; see \texttt{http://cvxopt.org}. 
We discretize in space by considering the congested blocks consisting of two sets
of equally spaced particles of equal mass $m = 1.10^{-3}$. 
The total number of the discrete particles in the system is here $N=2000$.
We discretize the time integral of the external force $f$ by the left-hand rectangle method
\[ 
	\int_{t_h}^{t_h+\Delta t}{f(s,X_s) \,ds} 
		\approx \Delta t \,f(t_h,X^{\Delta t}_{t_h}),
\]
where $X^\Delta_{t_h} \in \R^N$ denotes the numerical solution 
at the discrete time $t_h := h\Delta t$ with $\Delta t>0$.	
Finally, we express the constraint $X \in \tK$ through the linear constraint
\[ 
	G \big( X^{\Delta t}_{t_h} - \tX \big) 
		\leq \begin{pmatrix} 0 \\ \vdots \\0 \end{pmatrix} 
\]
where the matrix $G \in \R^{N\times N}$ is given by
\[ 
	G = \begin{pmatrix}
		1  & -1      &  ~     & ~ & ~   \\
		~  & \ddots & \ddots & ~ & ~   \\
		~  & ~      & \ddots & \ddots & ~ \\
		~  & ~      & ~ & \ddots & -1 \\
		~  & ~      & ~ & ~ & 1
	\end{pmatrix}. 
\]
We represent in Figures~\ref{fig:congestion}
and~\ref{fig:decongestion2} our numerical solution during the four
phases of the process. We observe excellent agreement between the
numerical solution and the exact solution.

\section{Extension to heterogeneous maximal constraint}\label{sec:heterogeneity}

We consider the case where in the maximal density constraint, the
upper bound is replaced by a function, so that $0\leq \rho\leq\rho^*$,
where $\rho^*$ is transported with the flow. Thus
\begin{subnumcases}{\label{eq:granulaire_heterogeneous}}
	\partial_t \rho + \partial_x(\rho u) = 0 
\label{eq:granulaire_heterogeneous_mass}\\
	\partial_t(\rho u-\partial_x\gamma) + \partial_x \big( (\rho u-\partial_x\gamma)u \big) = \rho f 
\\
	\partial_t \rho^* + u\partial_x \rho^* = 0  
\label{eq:rho*}\\
	0 \leq \rho \leq \rho^* 
\\
	(\rho^* - \rho) \gamma = 0, \quad \gamma \leq 0.
\end{subnumcases}
This system has been studied by Degond et al.~\cite{degond2016} in the
Navier-Stokes framework. At initial time we prescribe, in addition to
$\rho_0, u_0$, and $\gamma_0$, the initial constraint $\rho^*_0$ so
that $\rho_0 \leq \rho^*_0$ a.e. in $\R$. Combining
\eqref{eq:granulaire_heterogeneous_mass} with~\eqref{eq:rho*}, we
observe that the ratio $r := \rho/\rho^*$ is conserved:
\begin{equation}
\partial_t r + \partial_x(r u) = 0.
\end{equation}
We now consider $r$ instead of $\rho$ and look for a monotone
transport map $Y_t$ such that
\[ 
	r_t =(Y_t)_\# r_0.
\]
We reformulate the system~\eqref{eq:granulaire_heterogeneous} in the form
\begin{subnumcases}{\label{eq:granulaire_heterogeneous_bis}}
	\partial_t r + \partial_x(r u) = 0 
\label{eq:granulaire_heterogeneous_mass_bis}\\
	\partial_t(ru-\partial_x\gamma) + \partial_x \big( (ru-\partial_x\gamma)u \big) = \rho f 
\\
%	r = \rho/\rho^*
%\\
	\partial_t \rho^* + u\, \partial_x \rho^* = 0
\\
	0 \leq r \leq 1
\\
	(1 - r) \gamma = 0, \quad \gamma \leq 0.
\end{subnumcases}
This transport $Y_t$ has thus to satisfy the constraint $Y_t \in \tK$
with $\tK : = K + \tY$. Here $K$ is again the cone of monotone
transport maps, and $\tY \in K$ is the uniquely determined transport
map with $\trho = \tY_\# r_0$, where $\trho$ is the same as in
\eqref{eq:cong_transport}. By replacing the density $\rho$
by the ratio $r = \rho/\rho^*$ in the proof presented in the previous
sections, we can define exactly in the same way as before a Lagrangian
velocity $U_t = \frac{d}{dt}Y_t$ and an adhesion potential such that
\[
	\bar r(y)\bar U(y) = \bar r(y) 
		\bigg( U_t(y) - \int_0^t f(s,Y_s(y)) \,ds \bigg) - \partial_y \Gamma_t(y).
\]
for a.e. $y\in\R$ and $t\in(0,T)$, with $\bar r, \bar U$ suitable
initial data. One can then check that we have constructed a global
weak solution to the heterogeneous
system~\eqref{eq:granulaire_heterogeneous_bis}.

\begin{theorem}	
Let $T > 0$ and external force $f \in L^\infty\big(0,T;\Lip(\R)\cap
L^\infty(\R)\big)$ be given. Suppose that $\bar\rho,\rho_0^* \in
\mathcal{P}_2(\R)$ with $\bar\rho,\rho_0^*\ll \mathcal{L}^1$ and
$\rho_0^*>0$ a.e., and assume that
\[
	0\leq \bar\rho\leq \rho_0^*
	\quad\text{a.e. in $\R$.}
\]
It follows that $r_0 := \rho_0^*/\bar\rho \leq 1$. Let $\bar u \in
L^2(\R,\bar\rho)$ and define
\[
	\rho_0 := \bar\rho, \quad	U_0 := \bar u, \quad Y_0 := \id.
\]
There exists a curve $[0,T] \ni t \mapsto Y_t \in \tK$ that is
differentiable for a.e. $t \in (0,T)$ and solves
\[
	Y_t = \mathrm{P}_{\tK}\left( Y_0 + \int_0^t{\Uf_s \,ds} \right),
	\quad
	\Uf_t = U_0 + \int_0^t f(s, Y_s) \,ds.
\]
The following quantities are well-defined: $ \rho^*_t(x) :=
\rho^*_0(Y_t^{-1}(x))$ for a.e. $x\in\R$, and
\[
	U_t(y) := \dot{Y}_t(y), \quad 
	\Gamma_t(y) := \int_{-\infty}^{y}{\Big(U_t(z) - \Uf_t(z)\Big) r_0(z) \,dz}
\]
for $y\in\R$ and a.e. $t \in (0,T)$. There exist $(u_t,\gamma_t)
\in\mathcal{L}^2(\R, r_t)\times W^{1,1}(\R)$, such that
\[ 
	U_t = u_t\circ X_t, \quad \Gamma_t = \gamma_t \circ X_t,
\]
where $r_t := (X_t)_\# r_0$. The tuple $(r, u, \gamma, \rho^*)$ is
a global weak solution of system~\eqref{eq:granulaire_heterogeneous_bis}.
\end{theorem}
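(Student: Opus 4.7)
The plan is to reduce the heterogeneous system to the homogeneous case already treated. As observed in the paragraph preceding the theorem, the ratio $r := \rho/\rho^*$ satisfies the continuity equation $\partial_t r + \partial_x(ru) = 0$, while the constraint $\rho \leq \rho^*$ becomes $0 \leq r \leq 1$ and the exclusion relation becomes $(1-r)\gamma = 0$. Thus, with $r$ playing the role previously played by $\rho$, the problem has exactly the same structure as \eqref{eq:pressurelessMOD}, and the entire Lagrangian machinery of Sections~\ref{sec:pressureless}--\ref{sec:proof} applies with essentially no changes.

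First I would set $\bar r := r_0 = \bar\rho/\rho_0^*$ as the reference measure on $\R$, which is well-defined, belongs to $\mathcal{P}_2(\R)$ and satisfies $0 \leq \bar r \leq 1$ a.e. by the hypotheses $\bar\rho \leq \rho_0^*$ and $\rho_0^* > 0$. I would then form $\tK := K + \tY$ exactly as in \eqref{eq:kol}, with $\tY$ the monotone transport pushing $r_0$ to the maximally compressed density. Lemma~\ref{lem:existence-Xt} applied verbatim (its Banach fixed-point argument only uses the contraction of $\mathrm{P}_{\tK}$ and the Lipschitz regularity of $f$, neither of which depends on the specific reference measure) then yields the existence and uniqueness of $Y_t \in \tK$ solving the coupled system for $(Y_t,\Uf_t)$. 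Propositions~\ref{prop:space_velocity}--\ref{prop:incl_Hx} and their consequences, culminating in the identification of $U_t = \dot Y_t$ as the orthogonal projection of $\Uf_t$ onto $\mathcal{H}_{S_t}$ (with $S_t := Y_t - \tY$), transfer without modification. Likewise the definition of $\Gamma_t$, together with the sign $\Gamma_t \leq 0$, the support property $\mathrm{spt}\,\Gamma_t \subset \Omega_{S_t}$, and the exclusion identity $(1-r)\gamma = 0$ carry over directly from Subsection~\ref{sec:Gamma}. Finally, the weak formulations of \eqref{eq:granulaire_heterogeneous_mass_bis} and of the $ru$-momentum equation are recovered exactly as in Subsection~\ref{sec:weak_form}.

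The only new ingredient specific to the heterogeneous setting is the construction of $\rho_t^*$ and the verification of the transport equation \eqref{eq:rho*}. I would define
\[
    \rho_t^*(x) := \rho_0^*\big(Y_t^{-1}(x)\big)
\]
at every $x$ where $Y_t$ is injective, which by Lemma~\ref{lem:Ot} covers $\R$ minus the at most countable union of congested zones; on a maximal congested interval $I \in \mathcal{J}(\Omega_{S_t})$, the map $Y_t$ agrees with $\tY$ up to an additive constant, so $\rho_t^*$ extends unambiguously on $Y_t(I)$ via the affine change of variables. Because $\dot Y_t(y) = U_t(y) = u_t(Y_t(y))$ by Proposition~\ref{P:TWO}, the chain rule together with the change-of-variables argument used for the mass equation at the end of Subsection~\ref{sec:proof}.1 shows that for every $\xi \in \mathcal{C}^\infty_c([0,T) \times \R)$,
\[
    -\int_{\R} \xi(0,x)\,\rho_0^*(x)\,dx = \int_0^T\!\!\int_\R \big(\partial_t\xi + u_t\,\partial_x\xi\big)\,\rho_t^*(x)\,dx\,dt,
\]
i.e., \eqref{eq:rho*} holds in the distributional sense. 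Setting $\rho_t := r_t\rho_t^*$, the constraint $0\leq\rho_t\leq\rho_t^*$ and $(\rho_t^*-\rho_t)\gamma_t = \rho_t^*(1-r_t)\gamma_t = 0$ follow immediately.

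The main obstacle will be precisely this treatment of $\rho^*$ on congested zones: here $Y_t$ is not injective, so the pointwise formula $\rho_t^* = \rho_0^* \circ Y_t^{-1}$ requires interpretation, and one must verify that the resulting $\rho_t^*$ is still consistent with the Lagrangian flow so that the transport equation holds distributionally. The key fact that makes this work is that on $\Omega_{S_t}$ one has $\partial_y Y_t = \partial_y \tX$ so that $Y_t$ remains locally a diffeomorphism when lifted by $\tY$, allowing the change-of-variables identity to be carried out as in \eqref{eq:fract}. Everything else — the contraction estimate, the tangent-cone characterization, the projection identity for $U_t$, the support and sign of $\Gamma_t$, and the passage to the Eulerian weak formulation — is a direct replay of the homogeneous proof with $r$ in place of $\rho$.
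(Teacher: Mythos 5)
Your overall strategy -- substituting $r = \rho/\rho^*$ for $\rho$ and replaying the homogeneous Lagrangian construction verbatim -- is exactly the paper's approach; indeed the paper's own proof consists essentially of the sentence ``by replacing the density $\rho$ by the ratio $r = \rho/\rho^*$ in the proof presented in the previous sections, we can define exactly in the same way as before [\dots]''. You fill in strictly more detail than the paper does, particularly on $\rho^*_t$, which is good. Two points deserve correction, however.

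First, a small technical imprecision: $r_0 = \bar\rho/\rho_0^*$ satisfies $0 \leq r_0 \leq 1$, but it is \emph{not} in general a probability measure, since $\int_\R r_0\,dx$ need not equal $1$ (e.g.\ $\bar\rho = \rho_0^* = \tfrac12\mathbf{1}_{[-1,1]}$ gives $\int r_0 = 2$). The Cavalletti--Sedjro--Westdickenberg machinery still applies to a general finite reference measure, and the compressed state $\trho$ must then be the indicator of an interval of length $\int r_0\,dx$ rather than $1$; this is implicit in the paper too, but you should not assert $r_0 \in \mathcal{P}_2(\R)$.

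Second, and more substantively: the weak form you claim for the transport of $\rho^*$,
\[
    -\int_{\R} \xi(0,x)\,\rho_0^*(x)\,dx
    	= \int_0^T\!\!\int_\R \big(\partial_t\xi + u_t\,\partial_x\xi\big)\,\rho_t^*(x)\,dx\,dt,
\]
does \emph{not} follow from the change-of-variables argument used for the mass equation. That argument applies to push-forwards of measures, for which $\int \xi(t,x)\,(\,(Y_t)_\#\mu)(dx) = \int \xi(t,Y_t(y))\,\mu(dy)$ carries no Jacobian; but $\rho^*_t := \rho^*_0 \circ Y_t^{-1}$ is a composition of \emph{functions}, and under the substitution $x = Y_t(y)$ a factor $\partial_y Y_t(y)$ appears, whose time derivative you cannot control. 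The weak form you wrote is in fact the weak form of the \emph{continuity} equation $\partial_t\rho^* + \partial_x(\rho^* u) = 0$, which $\rho^*_t$ does not satisfy. The natural distributional reading of $\partial_t \rho^* + u\,\partial_x\rho^* = 0$ in this $L^2$ framework is to pair with the reference measure $\rho$, which yields the conservation law $\partial_t(\rho\rho^*) + \partial_x(\rho\rho^* u) = 0$; equivalently, one verifies $\rho^*_t(Y_t(y)) = \rho^*_0(y)$ as the Lagrangian statement of transport, which holds by definition. (Incidentally, your framing that ``$Y_t$ is not injective on congested zones'' is also off: since $\tY$ is strictly increasing $\bar r$-a.e.\ and $S_t$ is non-decreasing, $Y_t = S_t + \tY$ is essentially injective, so $Y_t^{-1}$ is well-defined off a null set and no ad-hoc extension on congested intervals is needed.)

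Everything else -- the fixed-point existence of $Y_t$, the projection characterisation of $U_t$, the sign and support of $\Gamma_t$, the recovery of the mass and momentum equations for $r$, and the exclusion relation -- is a correct replay of the homogeneous proof and coincides with the paper.
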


\paragraph{Numerical simulation.} We consider the following initial data 
(see Figure~\ref{fig:var-rhomax1}):
\begin{gather*}
	\rho^*_0(x) = 1 + 0.2*\big(1-\cos(2\pi(x-0.5))\big), \\
	\rho_0(x) = 0.8 \,\rho^*_0(x) \,\mathbf{1}_{[0,1]}(x).
\end{gather*}
We add a compressive external force such that
\[ 
f(x) = \begin{cases}
       +0.5 \quad & \text{if $x < 0.5$} \\
       -0.5 \quad & \text{if $x > 0.5$,}
       \end{cases}
\]
which tends to concentrate the density at the middle of the interval $[0,1]$.
We set the particle mass $m = 1.10^{-3}$, 
so that $N=1000$ discrete particles are used in the simulation. 
We display in Figures~\ref{fig:var-rhomax1} and~\ref{fig:var-rhomax2} the concentration phenomenon 
with the appearance of a congested zone where the velocity is equal to $0$ 
and the adhesion potential is negative.
 
\begin{figure}[t]
	\begin{center}
		\includegraphics[width = 7.5cm]{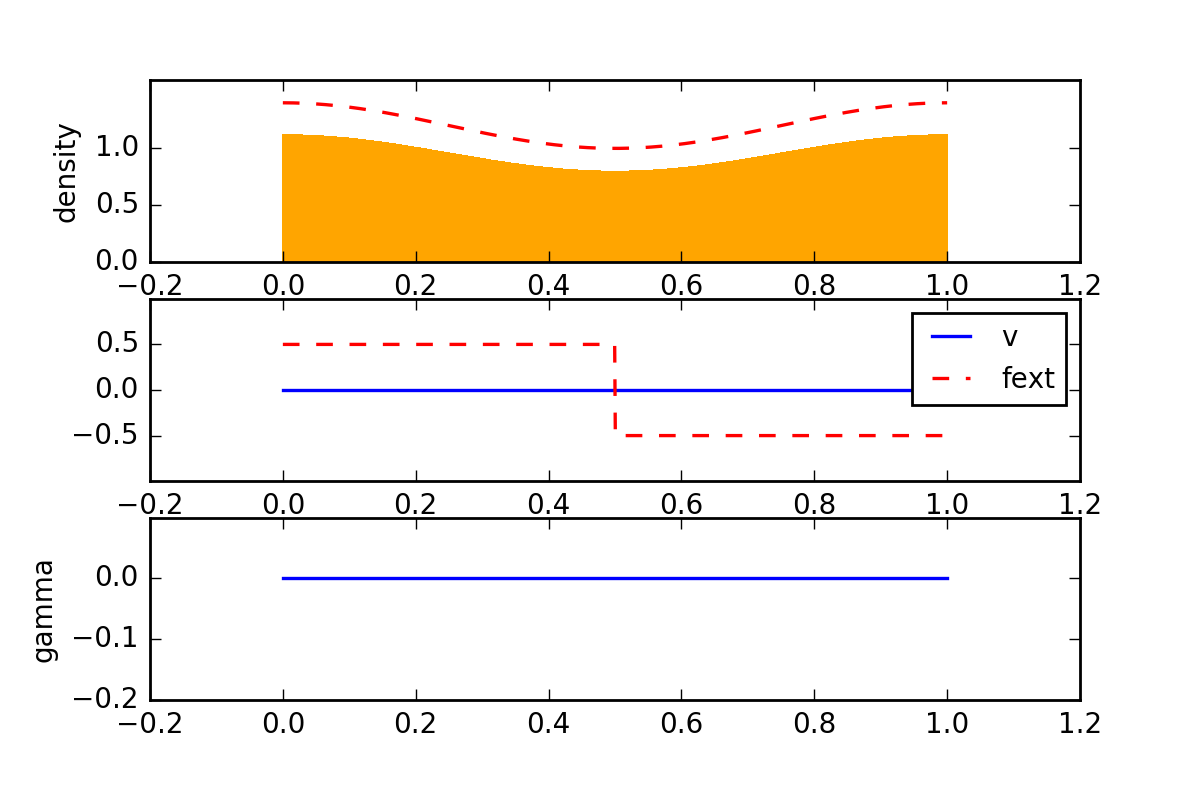} 
		\includegraphics[width = 7.5cm]{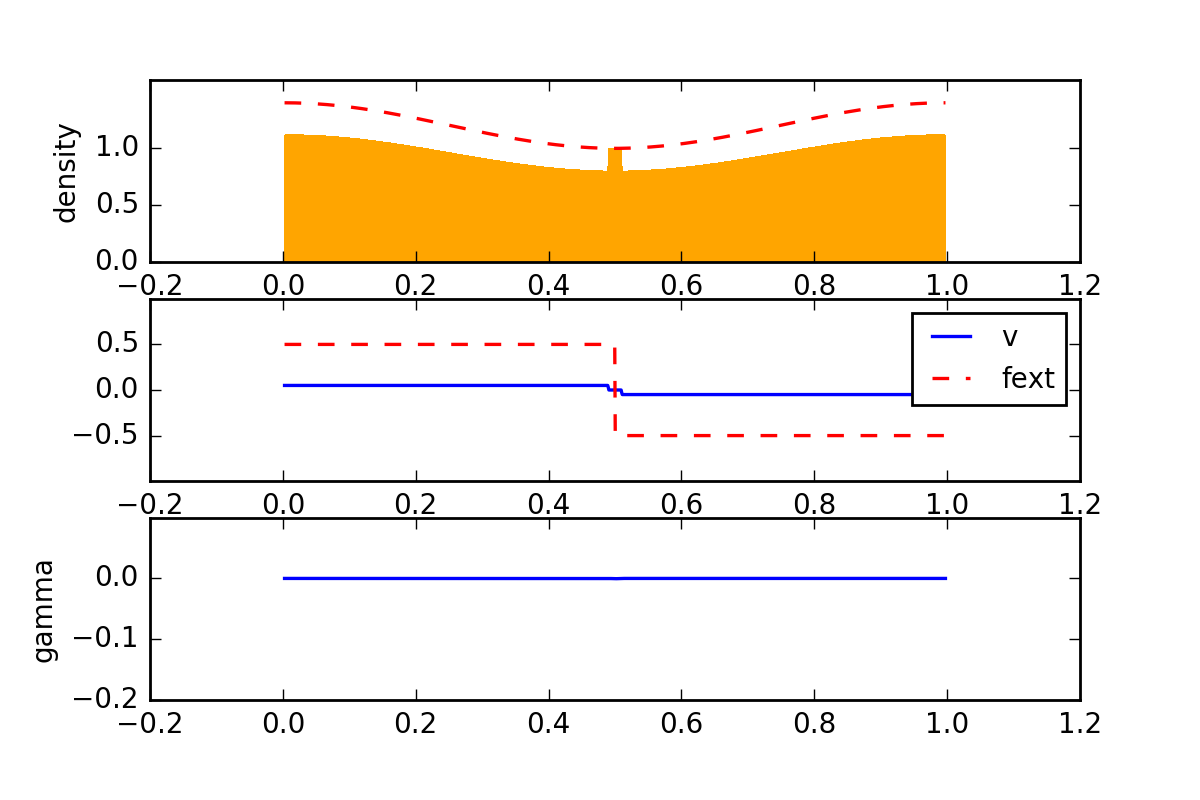}
	\end{center}
	\caption{{\label{fig:var-rhomax1}} Variable maximal density: solution at times $t = 0$ (left) 
	and $t=0.1$ (right).}
\end{figure}
\begin{figure}[t]
	\begin{center}
		\includegraphics[width = 7.5cm]{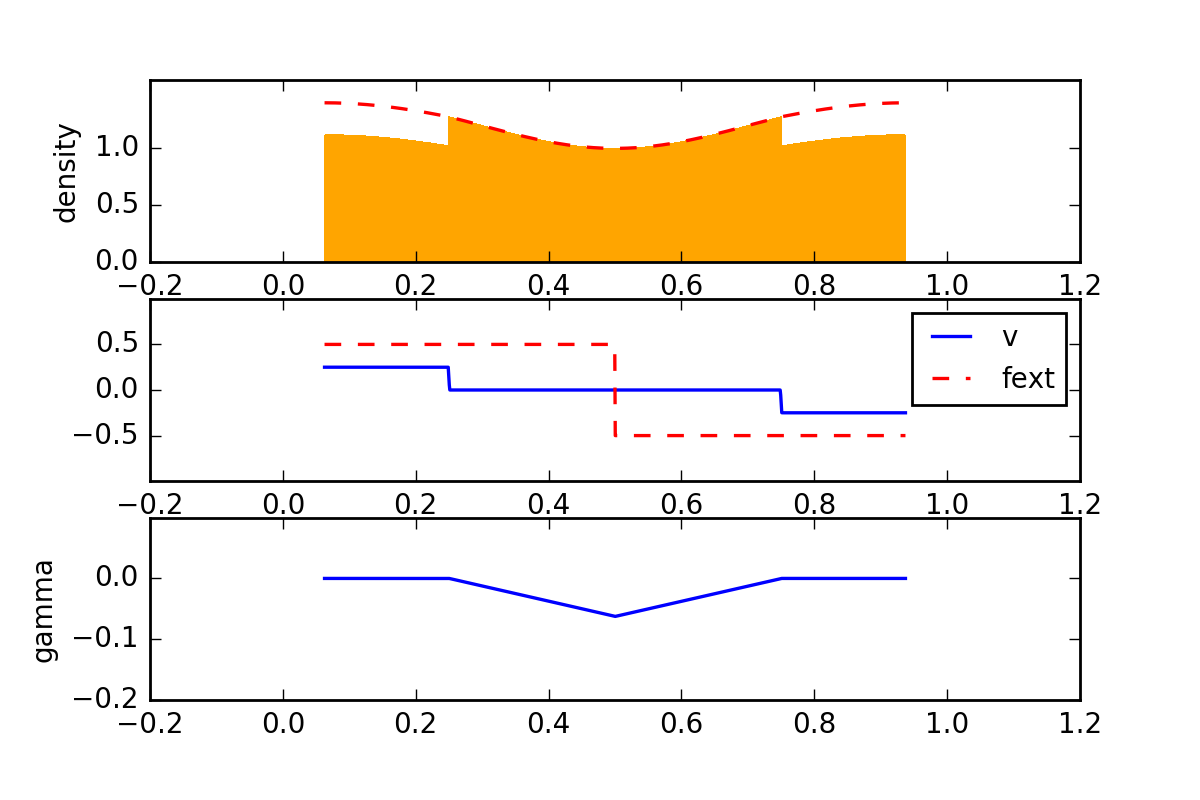} 
		\includegraphics[width = 7.5cm]{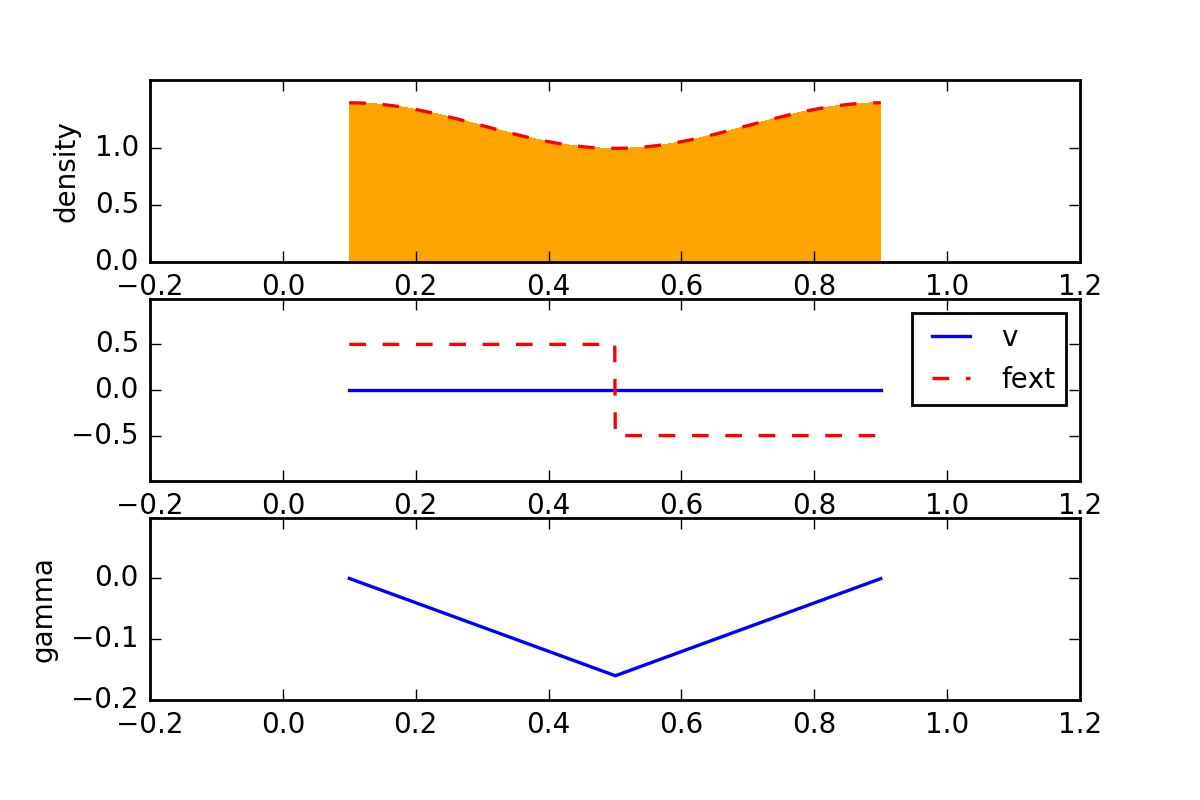}
	\end{center}
	\caption{{\label{fig:var-rhomax2}} Variable maximal density: solution at times $t = 0.5$ (left)
	and $t=0.8$ (right).}
\end{figure}

%%%%%%%%%%%%%%%%%%%%%%%%%%%%%%%%%%%%%%%%%%%%%%%%%%%%%%%%%%%%%%%%%%
% bibliography

\end{document}